\documentclass{amsart}

\usepackage{mathrsfs}
\usepackage[centertags]{amsmath}
\usepackage{amsfonts}
\usepackage{amsthm}
\usepackage{amssymb}
\usepackage{color}
\usepackage[colorinlistoftodos,prependcaption]{todonotes}
\usepackage[colorlinks,linkcolor={blue},citecolor={blue},urlcolor={red}]{hyperref}
\usepackage{enumerate}

\let\emptyset \undefined
\let\ge       \undefined
\let\le       \undefined
\newsymbol\le          1336  \let\leq\le
\newsymbol\ge          133E  \let\geq\ge
\newsymbol\emptyset    203F
\newsymbol\notle       230A
\newsymbol\notge       230B

\allowdisplaybreaks

%%%%%%%%%%%%%%%%%%%%%%%%%%%%%%%%%%%%%%%%%%%%%%%

%%% theorems

\newtheorem{theorem}{Theorem}[section]
\theoremstyle{remark}
\newtheorem{remark}[theorem]{Remark}
\newtheorem{example}[theorem]{Example}
\theoremstyle{plain}
\newtheorem{corollary}[theorem]{Corollary}
\newtheorem{lemma}[theorem]{Lemma}
\newtheorem{proposition}[theorem]{Proposition}
\newtheorem{definition}[theorem]{Definition}
\newtheorem{hypothesis}[theorem]{Hypothesis}

\newtheorem{question}[theorem]{Question}

\numberwithin{equation}{section}

%%%%%%%%%%%%%%%%%%%%%%%%%%%%%%%%%%%%%%%%%%%%%%%

% number systems

\def\N{{\mathbb N}}

\def\R{{\mathbb R}}

%%% probability stuff

\newcommand{\Om}{\Omega}
\newcommand{\om}{\omega}
\newcommand{\F}{\mathscr{F}}
\newcommand{\G}{\mathscr{G}}
\renewcommand{\P}{\mathbb{P}}
\newcommand{\E}{\mathbb{E}}

% miscellaneous

\newcommand{\la}{\lambda}
\newcommand{\eps}{\varepsilon}
\newcommand{\calL}{\mathscr{L}}
\newcommand{\n}{\|}
\newcommand{\one}{{\bf 1}}

\newcommand{\lb}{\langle}
\newcommand{\rb}{\rangle}
\newcommand{\limn}{\lim_{n\to\infty}}

\newcommand{\ud}{\,{\rm d}}

\renewcommand{\H}{\mathscr{H}}
\newcommand{\Dom}{\mathsf{D}}

\newcommand{\bF}{\mathscr{P}}

\allowdisplaybreaks

\begin{document}

\title[Stochastic convolutions in 2-smooth Banach spaces]
{Maximal estimates for stochastic convolutions in 2-smooth Banach spaces and applications to stochastic evolution equations}

\author{Jan van Neerven, Mark Veraar}

\address{Delft Institute of Applied Mathematics\\
Delft University of Technology\\
P.O. Box 5031, 2600 GA Delft\\
The Netherlands}
\email{J.M.A.M.vanNeerven/M.C.Veraar@TUDelft.nl}

\thanks{The second named author is supported by VIDI subsidy 639.032.427
of the Netherlands Organisation for Scientific Research (NWO)}

\keywords{maximal estimates, stochastic convolutions, 2-smooth Banach spaces, stochastic evolution equations, semigroups of operators, evolution families}

\begin{abstract}
This paper presents a survey of maximal inequalities for stochastic convolutions in $2$-smooth Banach spaces and their applications to stochastic evolution equations.
\end{abstract}

\maketitle

%\tableofcontents

\section{Introduction}

This paper presents an overview of maximal inequalities for Banach space-valued stochastic processes $(u_t)_{t\in [0,T]}$ of the form
\begin{align}\label{eq:ut} u_t = \int_0^t S(t,s)g_s \ud W_s,\qquad t\in [0,T],
\end{align}
where $(S(t,s))_{0\le s\le t\le T}$ is a strongly continuous evolution family acting on a Banach space $X$,
$(W_t)_{t\in [0,T]}$ is a (cylindrical) Brownian motion defined on a probability space $\Om$, and $(g_t)_{t\in [0,T]}$ is a stochastic process taking values in $X$ (in the case of a Brownian motion $(W_t)_{t\in [0,T]}$)
or in a space of operators acting from $H$ to $X$ (in the case of a cylindrical Brownian motion whose covariance is given by the inner product of a Hilbert space $H$), defined on the same probability space $\Om$. The stochastic integral in \eqref{eq:ut} is the Banach space-valued extension of the classical It\^o stochastic integral. In the important special case $S(t,s) = S(t-s)$ arising from a one-parameter semigroup of operators $(S(t))_{t\ge 0}$, the stochastic integral \eqref{eq:ut} takes the form of a stochastic convolution. This justifies our slight abuse of terminology to also refer to \eqref{eq:ut} as a stochastic convolution. In addition to reviewing the literature on this topic, some new contributions are included as well.

Under a {\em maximal inequality} for $(u_t)_{t\in [0,T]}$ we understand a bound on the random variable
$$ u^\star(\om) = \sup_{t\in [0,T]} \n u_t(\om)\n, \qquad \om\in\Om.$$
Maximal inequalities are important in the theory of stochastic evolution equations, where the mild solution of the time-dependent inhomogeneous stochastic evolution equation
\begin{align*} \begin{cases}
\ud u_t            &\!\!\!\!\! = A(t) u_t\ud t + g_t\ud W_t,  \qquad t\in [0,T],\\
\phantom{\ud} u_0  &\!\!\!\!\! = 0  \end{cases}
\end{align*}
is of the form \eqref{eq:ut} provided one assumes that the operator family $(A(t))_{t\in [0,T]}$ generates the evolution family $(S(t,s))_{0\le s\le t\le T}$ in a suitable sense.
The availability of a maximal inequality in this setting typically implies that the solution process $(u_t)_{t\in [0,T]}$ has a continuous version.

In the present paper we limit ourselves to maximal estimates of Burkholder type, where $u^\star$ is estimated in terms of a square function norm analogous to the one occurring in the classical Burkholder maximal inequality for continuous time martingales.
Different techniques to obtain pathwise continuous solutions, such as developed in
\cite{BGIPPZ, IMMTZ}, will not be discussed here.

At present, two theories of It\^o stochastic calculus in Banach spaces are available: for $2$-smooth Banach spaces \cite{Nei} and for UMD Banach spaces \cite{NVW07a, NVW07b}. Both approaches are surveyed in \cite{NVW13}.
Each of the two approaches has its advantages and disadvantages. The construction of the stochastic integral in $2$-smooth Banach spaces is fairly elementary and its use in the theory of stochastic evolution equations is straightforward, but its applicability covers only half of the $L^p$-scale (namely the exponents $2\le p<\infty$) \cite{Brz95, Brz97}. It replaces the basic It\^o isometry with a one-sided estimate which necessarily entails some loss in precision. This manifests itself in questions relating to maximal regularity, which cannot be fully treated with this theory.
The stochastic integral in UMD Banach spaces covers the full reflexive $L^p$-scale (exponents $1< p<\infty$) and leads to two-sided estimates for the stochastic integrals, but due to the more subtle form of the expressions involved it requires some constraints on the properties of the stochastic processes under consideration. In practice this entails that the theory can be applied effectively to evolution equations in the parabolic setting only, but in that setting a full-fledged maximal regularity theory is available \cite{AgreVernon, NVW12a,NVW12b,  PorVer}. The $2$-smooth theory is applicable beyond the parabolic setting and covers the case of arbitrary $C_0$-evolution families.

In order to keep this paper at a reasonable length we will exclusively deal with maximal estimates in the $2$-smooth setting.
Maximal estimates for stochastic convolutions in $2$-smooth Banach spaces are useful in applications to stochastic partial differential equations. Typically one takes $X$ to be $L^p$, the Bessel potential space $H^{s,p}$, or the Besov space $B^{s}_{p,q}$; as will be explained in Example \ref{ex:2smooth} these spaces are $2$-smooth if $2\le p,q< \infty$.
Maximal estimates in the setting of UMD spaces are covered in \cite{VerWei} and the follow-up works \cite{NVW12a,NVW12b}.
Maximal inequalities for stochastic convolutions in $2$-smooth Banach spaces with respect to other noise processes than (cylindrical) Brownian motions, such as Poissonian noise, are discussed in \cite{HauSei2, ZBH, ZhuBrzLiu} and the references therein; see also survey \cite{MarRoc14} for the Hilbertian case.

An important motivation to study stochastic partial differential equations in the setting of $2$-smooth Banach spaces comes from the fact that estimates in Sobolev and Besov spaces with high integrability exponent $p$ (which are $2$-smooth) can be combined with Sobolev embedding results to obtain obtain further integrability and regularity properties of solutions. This plays a key role in many papers (see \cite{AgreVernon,Brz97} and references therein).

\section{Preliminaries}\label{sec:preliminaries}

We assume familiarity with the basic notions of probability theory and stochastic analysis. This preliminary section fixes notation  following  the references \cite{NVW13,HNVW16, HNVW17} where unexplained terminology can be found.
All random variables and stochastic processes are assumed to be defined on a probability space $(\Om,\F,\P)$ which we fix once and for all. We work over the real scalar field.

\subsection{Stochastic preliminaries}\label{subsec:stoch}

When $X$ is a Banach space, an $X$-valued {\em random variable} is a strongly measurable
function $\phi:\Om\to X$, i.e., a function that can be approximated $\P$-almost surely by a sequence of $\F$-measurable simple functions with values in $X$. The adjective `$X$-valued' will be usually omitted; depending on the context, random variables can be real- or vector-valued.
The expected value of an integrable random variable $\phi$ is denoted by $\E \phi = \int_\Om \phi \ud \P$. For $0< p\le \infty$
we denote by $L^p(\Om;X)$ the (quasi-) Banach space of strongly measurable functions $\phi:\Om\to X$ such that $\E \n \phi\n^p<\infty$, with the usual adjustment for $p=\infty$, and by $L^0(\Om;X)$ the space of all strongly measurable functions $\phi:\Om\to X$ endowed with the metric topology induced by convergence in measure. In dealing with elements of these spaces it is always understood that we identify random variables that equal almost surely. When $0\le p\le \infty$ and $\G$ is a sub-$\sigma$-algebra of $\F$, we denote by $L^p(\Om,\G;X)$ the closed subspace of $L^p(\Om;X)$ of all elements that are strongly measurable as random variables defined on $(\Om,\G,\P|_\G)$. The conditional expectation of a random variable $\phi$ given $\G$ is denoted by $\E_\G(\phi)$ or $\E(\phi|\G)$.

A {\em filtration} is a family $(\F_t)_{t\in [0,T]}$ of sub-$\sigma$-algebras of $\F$ such that $\F_s\subseteq \F_t$ whenever $s\le t$. A {\em process} is a family of $X$-valued random variables $(\phi_t)_{t\in [0,T]}$. It is called {\em adapted} if for every $t\in [0,T]$ the random variable $\phi_t$ is strongly measurable as a random variable on $(\Om,\F_t,\P|_{\F_t})$. A process $\phi$ is called a {\em martingale} if $\E(\phi_t|\F_s) = \phi_s$ almost surely whenever $s\le t$.
Discrete filtrations and martingales are defined similarly, replacing the index set $[0,T]$ by a finite set $\{0,1,\dots,N\}$.

The {\em progressive $\sigma$-algebra} on $[0,T]\times\Omega$ is the $\sigma$-algebra $\mathscr{P}$ generated by sets of the form $B\times A$ with $B\in \mathcal{B}([0,t])$ and $A\in \F_t$, where $t$ ranges over $[0,T]$. A process $\phi$ is said to be {\em progressively measurable} if it is strongly measurable with respect to $\mathscr{P}$.
Two processes $\phi,\psi$ are called {\em versions} of each other if for every $t\in [0,T]$ we have $\phi_t = \psi_t$ almost surely;
the exceptional set is allowed to depend on $t$. A process $\phi$ is said to have a {\em continuous version} if it has a pathwise continuous version $\psi$, i.e., a version such that for all $\om\in \Om$ the path $t\mapsto \psi_t(\om)$ is continuous.

Next we extend the notion of a Hilbert--Schmidt operator to the Banach space setting. The reader is referred to  \cite[Chapter 9]{HNVW17} and \cite{Nee10} for systematic treatments. Let $\H$ be a Hilbert space and $X$ be a Banach space.
The space of finite rank operators from $H$ into $X$ is denoted by $\H\otimes X$.
For a finite rank operator $R\in \H\otimes X$, say $R = \sum_{n=1}^N h_n\otimes x_n$ with $(h_n)_{n=1}^N$ orthonormal in $\H$ and
$(x_n)_{n=1}^N$ a sequence in $X$ (we can always represent $R$ in this way by a Gram--Schmidt orthogonalisation argument), we define
\begin{equation*} \n R\n_{\gamma(\H,X)}^2 = \E \Big\n \sum_{n=1}^N \gamma_n x_n\Big\n^2,
\end{equation*}
where $(\gamma_n)_{n=1}^N$ is a sequence of independent standard Gaussian random variables. The norm  $\n \cdot\n_{\gamma(\H,X)}$ is well defined, and the completion of $\H\otimes X$ with respect to this norm is denoted by $\gamma(\H,X)$. The natural inclusion mapping $\H\otimes X\subseteq \calL(\H,X)$ extends to an injective and contractive inclusion mapping $\gamma(\H,X)\subseteq \calL(\H,X)$.
A linear operator in $\calL(\H, X)$ is said to be {\em $\gamma$-radonifying} if it belongs to $\gamma(\H,X)$.
For Hilbert spaces $X$, the identity mapping on $\H\otimes X$ extends to an isometrical isomorphism $$\gamma(\H,X) \simeq \calL_2(\H,X),$$ where $\calL_2(\H,X)$ is the space of Hilbert--Schmidt operators from $\H$ to $X$.
Another important instance where an explicit identification is available is the case $X = L^p(S;Y)$ with $(S,\mathscr{A},\mu)$ a measure space, $1\le p<\infty$, and $Y$ a Banach space; the mapping $h\otimes (f\otimes y) \mapsto f\otimes (h\otimes y)$ sets up an isomorphism of Banach spaces $$ \gamma(\H, L^p(S;Y)) \simeq L^p(S;\gamma(\H,Y)).$$

\subsection{$2$-Smooth Banach spaces}\label{subsec:2smooth}

A Banach space $X$ is called {\em $(p,D)$-smooth}, where $p\in [1,2]$ and $D\ge 0$ is a constant, if for all $x,y\in X$ we have
\begin{align}\label{eq:K}
	 \n  x+y\n^p+\n  x-y\n^p\leq 2\n  x\n^p+2D^p\n  y\n^p.
\end{align}
A Banach space is called {\em $p$-smooth} if it is $(p,D)$-smooth for some $D\ge 0$.
The case $x=0$ demonstrates that the constant in \eqref{eq:K} necessarily satisfies $D\ge 1$. For $p=2$ the defining condition is a generalised parallelogram identity.

\begin{example}\label{ex:2smooth}
Here are some examples of $2$-smooth Banach spaces:
\begin{itemize}
 \item Every Hilbert space  is $2$-smooth (with $D=1$, by the parallelogram identity).
 \item The space $L^p(\mu)$ is $2$-smooth if and only if $2\le p<\infty$ (and in that case we may take $D=\sqrt{p-1}$, see \cite[Proposition 2.1]{Pin}).
 More generally, the space $L^p(\mu;X)$ is $2$-smooth if and only if $X$ is $2$-smooth and $2\le p<\infty$
(in that case, if $X$ is $(2,D)$-smooth, then $L^p(\mu;X)$ is $(2,D\sqrt{p-1})$-smooth \cite{NV20}; for an earlier result in this direction see \cite{Fig}).
 \item For any $s\in \R$, the Bessel potential space $H^{s,p}(\R^d)$ is $2$-smooth if and only if $2\le p<\infty$ (and in that case we may take $D=\sqrt{p-1}$). Indeed, this space is isometrically isomorphic to $L^p(\R^d)$, the isometry being given by the Fourier multiplier $(1+|\xi|^2)^{s/2}$.
  \item For any $s\in \R$, the Besov space $B^{s}_{p,q}(\R^d)$ (equipped with its Littlewood--Paley norm) is $2$-smooth if and only if  $2\le p,q<\infty$ (and in that case we may take $D=\sqrt{(p-1)(q-1)}$) Indeed, with this norm, Littlewood--Paley theory identifies $B^{s}_{p,q}(\R^d)$ isometrically with a closed subspace of $\ell^p(L^q(\R^d))$. Analogous considerations apply to the Triebel--Lizorkin spaces $F^{s}_{p,q}(\R^d)$, and for both scales the results extend to more general open domains $\mathscr{O}\subseteq \R^d$ (with the same constant $D$ if one uses the quotient norm of \cite[Section 4.2.1]{Tri}).
  \item For any $k\in\N$, the Sobolev space $W^{k,p}(\mathscr{O})$ is $2$-smooth if and only if $2\le p<\infty$ (and in that case we may take $D=\sqrt{p-1}$ if we use the norm $\n f\n_{W^{k,p}(\mathscr{O})}^p = \sum_{|\alpha|\le k} \n \partial^\alpha f\n_p^p$).
  \item The Schatten trace ideal $C^p$ is $2$-smooth if and only if $2\le p<\infty$ (and in that case we may take $D=\sqrt{p-1}$, see \cite{BCL}).
\end{itemize}
\end{example}

A Banach space $X$ is said to have \emph{martingale type} $p\in [1,2]$\index{martingale type $p$} if there exists a constant $C\ge 0$ such that
\begin{equation}\label{eq:def-marttype}
  \E \n f_N\n {p}
  \leq C^p \Bigl(\E \n f_0\n ^p+\sum_{n=1}^N\n f_n - f_{n-1}\n^p\Bigl)
\end{equation}
for all $X$-valued $L^p$-martingales $(f_n)_{n=0}^N$. The case $N=0$ demonstrates that the constant in \eqref{eq:def-marttype} necessarily satisfies $C\ge 1$.
It is a fundamental result due to Pisier \cite{Pis} (see also \cite{HNVW4, Wen, Woy}) that, for any $p\in [1,2]$, up to equivalence of norms, a Banach space is $p$-smooth if and only if it has martingale type $p$.
The advantage of $p$-smoothness over martingale type $p$ is that the former is an isometric condition, whereas the latter is  isomorphic. We will encounter various maximal inequalities for semigroups or evolution families of contractions acting on $2$-smooth Banach spaces. Such results cannot be expected to have a counterpart in martingale type $2$-spaces, unless they hold more generally for uniformly bounded $C_0$-semigroups, the point being that contractivity is typically not preserved under passing to equivalent norms.

\section{Maximal inequalities for indefinite stochastic integrals}\label{sec:max-indef}

Let $\H$ be a Hilbert space with inner product $(\cdot|\cdot)$.
An {\em $\H$-isonormal process} is a
mapping $W: \H\to L^2(\Om)$ with the following two properties:
\begin{enumerate}
\item[\rm(i)] for all $h\in \H$ the random variable $Wh$ is Gaussian;
\item[\rm(ii)] for all $h_1,h_2\in \H$ we have
$\E (Wh_1 \cdot Wh_2) = (h_1|h_2)$.
\end{enumerate}
For $h=0$ we interpret $W0$ as the Dirac measure concentrated at $0$. From (ii) it easily follows that $\H$-isonormal processes are linear, and this in turn implies that
for all $h_1,\dots,h_N\in \H$ the $\R^N$-valued random variable
$(Wh_1,\dots, Wh_N)$ is jointly Gaussian, i.e.,
$(Wh)_{h\in H}$ is a Gaussian process; see \cite{Nua} for the details.

If $W$ is an $L^2(0,T)$-isonormal process, the process $(W\one_{(0,t)})_{t\in [0,T]}$ is a standard Brownian motion. This prompts us to define,
for a Hilbert space $H$, a {\em cylindrical $H$-Brownian motion} as an $L^2(0,T;H)$-isonormal process. In what follows the Hilbert space $H$ will be considered to be fixed and we will consider a fixed cylindrical $H$-Brownian motion $W$.
Following standard usage
in the literature we will write
$$ W_th:= W(\one_{(0,t)}\otimes h),\qquad t\in [0,T], \ h\in H.$$
For each $h\in H$,
$ (W_th)_{t\in [0,T]}$ is a Brownian motion, which is standard if and only if $h$ has norm one; two such Brownian motions
corresponding to $h_1,h_2\in H$ are independent if and only if $h_1$ and $h_2$ are orthogonal.
A cylindrical $H$-Brownian motion $W$ is said to be {\em adapted} to a given
filtration $(\F_t)_{t\in [0,T]}$ on $(\Om,\F,\P)$ if $W_th\in L^2(\Om,\F_t)$ for all $t\in [0,T]$ and $h\in H$.
In what follows, we will always assume that a filtration has been fixed and that $W$ is adapted to it.

A stochastic process $\Phi :[0,T]\times \Om\to \gamma(H,X)$ is called an {\em adapted finite rank step process} if there exist $0=s_0<s_1<\ldots<s_n=T$, random variables $\xi_{ij}\in L^\infty(\Omega,\F_{s_{j-1}})\otimes X$ (the subspace of $L^\infty(\Omega;X)$ of strongly $\F_{s_{j-1}}$-measurable random variables taking values in a finite-dimensional subspace of $X$)
for $i=1, \ldots, m$ and $j=1, \ldots, n$, and an orthonormal system $h_1,\dots, h_m$ in $H$ such that
\begin{equation*}%\label{eq:simple}
\Phi  = \sum_{j=1}^{n} \one_{(s_{j-1},s_{j}]} \otimes \sum_{i=1}^m h_i\otimes \xi_{ij}.
\end{equation*}
The {\em stochastic integral process} associated with $\Phi$ is then defined by
\[\int_0^t \Phi_s \ud W_s := \sum_{j=1}^n  \sum_{i=1}^m  (W_{s_{j}\wedge t}-W_{s_{j-1}\wedge t})h_i \otimes \xi_{ij}, \qquad t\in [0,T].\]
Since $s\mapsto W_sh$, being a Brownian motion, has a continuous version, it follows that the process $t\mapsto \int_0^t\Phi_s \ud W_s$ has a continuous version. Such versions will always be used in the sequel.

The following elementary upper bound for the stochastic integral of $X$-valued elementary adapted processes with respect to the cylindrical Brownian motion $W$, due to Neidhardt \cite{Nei}, extends the It\^o isometry to $2$-smooth Banach spaces. It is important to note that the proposition only provides an upper bound. It can be shown that this upper bound is an equivalence of norms if and only if $X$ is isomorphic to a Hilbert space \cite{NVW13}. Indeed it is this one-sidedness of the bound which constitutes the main limitation of the It\^o stochastic integral in $2$-smooth Banach spaces compared to its competitor for UMD Banach spaces.

\begin{proposition}[Neidhardt]\label{prop:Neid}
 Let $X$ be a $(2,D)$-smooth Banach space. Then, for all adapted finite rank step processes
$\Phi:[0,T]\times \Omega\to \gamma(H,X)$,
$$ \E \Big\n \int_0^T \Phi_t \ud W_t\Big\n^2 \le D^2 \|\Phi\|_{L^2(\Omega;L^2(0,T;\gamma(H,X)))}^2.
$$
\end{proposition}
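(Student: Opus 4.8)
The plan is to prove the estimate by a discrete-time martingale argument, reducing it to the case $p=2$ of the defining inequality \eqref{eq:K} applied repeatedly along the partition. First I would fix an adapted finite rank step process
\[
\Phi = \sum_{j=1}^n \one_{(s_{j-1},s_j]}\otimes\sum_{i=1}^m h_i\otimes\xi_{ij},
\]
and set $N_j := \int_0^{s_j}\Phi_s\ud W_s = \sum_{k=1}^j\sum_{i=1}^m (W_{s_k}-W_{s_{k-1}})h_i\otimes\xi_{ik}$, so that $N_0=0$ and $(N_j)_{j=0}^n$ is an $X$-valued $L^2$-martingale with respect to $(\F_{s_j})_{j=0}^n$. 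Writing $d_j := N_j - N_{j-1} = \sum_{i=1}^m (W_{s_j}-W_{s_{j-1}})h_i\otimes\xi_{ij}$, the goal \(\E\|N_n\|^2 \le D^2\sum_{j=1}^n \E\|d_j\|^2\big/\text{(length factor)}\) will follow once we both establish a martingale-type-$2$ bound with constant $D$ and compute each term $\E\|d_j\|^2$ explicitly.

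The key step is to show that $(2,D)$-smoothness upgrades \eqref{eq:K} to the one-sided martingale inequality $\E\|N_n\|^2 \le \E\|N_0\|^2 + D^2\sum_{j=1}^n\E\|d_j\|^2$. For this I would proceed inductively: conditioning on $\F_{s_{j-1}}$, use that $N_{j-1}$ is $\F_{s_{j-1}}$-measurable while $\E(d_j\mid\F_{s_{j-1}})=0$ (the Brownian increments have mean zero and are independent of $\F_{s_{j-1}}$, and the $\xi_{ij}$ are $\F_{s_{j-1}}$-measurable), so that by \eqref{eq:K} with $p=2$, $x=N_{j-1}$, $y=d_j$,
\[
\|N_{j-1}+d_j\|^2 + \|N_{j-1}-d_j\|^2 \le 2\|N_{j-1}\|^2 + 2D^2\|d_j\|^2.
\]
Taking conditional expectations given $\F_{s_{j-1}}$ and using that $N_{j-1}-d_j$ and $N_{j-1}+d_j$ have the same conditional distribution up to the sign of the zero-mean increment — more carefully, that $\E(\|N_{j-1}-d_j\|^2\mid\F_{s_{j-1}}) = \E(\|N_{j-1}+d_j\|^2\mid\F_{s_{j-1}})$ because conditionally the law of $d_j$ is symmetric (being a finite sum of independent centered Gaussians times fixed vectors) — one gets $\E(\|N_j\|^2\mid\F_{s_{j-1}}) \le \|N_{j-1}\|^2 + D^2\,\E(\|d_j\|^2\mid\F_{s_{j-1}})$. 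Taking full expectations and iterating over $j$ yields the claimed martingale inequality. I expect the conditional symmetry of $d_j$ to be the one point that needs care; it is genuinely true here because the Gaussian increments are independent of $\F_{s_{j-1}}$, but it is exactly the place where the argument would break for a general martingale difference, and it is why the constant $D$ (rather than $D\sqrt{p-1}$-type factors) comes out cleanly.

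Finally I would compute $\E\|d_j\|^2$. Conditioning on $\F_{s_{j-1}}$ and using independence of the increments $(W_{s_j}-W_{s_{j-1}})h_i$ from $\F_{s_{j-1}}$, together with the fact that the $h_i$ are orthonormal so $((W_{s_j}-W_{s_{j-1}})h_i)_{i=1}^m$ are independent centered Gaussians each of variance $s_j - s_{j-1}$, the defining property of the $\gamma$-norm gives
\[
\E\big(\|d_j\|^2\mid\F_{s_{j-1}}\big) = (s_j - s_{j-1})\,\E\Big\|\sum_{i=1}^m\gamma_i\xi_{ij}\Big\|^2 = (s_j-s_{j-1})\,\Big\|\sum_{i=1}^m h_i\otimes\xi_{ij}\Big\|_{\gamma(H,X)}^2,
\]
and since on $(s_{j-1},s_j]$ the integrand $\Phi_s$ equals exactly $\sum_i h_i\otimes\xi_{ij}$, this is $\int_{s_{j-1}}^{s_j}\E(\|\Phi_s\|_{\gamma(H,X)}^2\mid\F_{s_{j-1}})\ud s$. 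Summing over $j$, taking full expectations, and combining with the martingale inequality from the previous step gives
\[
\E\Big\|\int_0^T\Phi_t\ud W_t\Big\|^2 = \E\|N_n\|^2 \le D^2\sum_{j=1}^n\E\|d_j\|^2 = D^2\int_0^T\E\|\Phi_t\|_{\gamma(H,X)}^2\ud t = D^2\|\Phi\|_{L^2(\Omega;L^2(0,T;\gamma(H,X)))}^2,
\]
as desired. The only genuine obstacle is the conditional-symmetry point in the inductive step; everything else is bookkeeping with orthonormal systems and the Gaussian $\gamma$-norm.
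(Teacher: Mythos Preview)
The paper states Proposition~\ref{prop:Neid} without proof, attributing it to Neidhardt and calling it elementary. Your argument is correct and complete: the inductive step via conditional symmetry of the Gaussian increment $d_j$ is valid (conditionally on $\F_{s_{j-1}}$, $d_j$ is a centred Gaussian in $X$, hence $d_j$ and $-d_j$ have the same conditional law), and the identification of $\E(\|d_j\|^2\mid\F_{s_{j-1}})$ with $(s_j-s_{j-1})\|\Phi_{s_j}\|_{\gamma(H,X)}^2$ follows directly from the definition of the $\gamma$-norm once the $\xi_{ij}$ are frozen. Two cosmetic points: the phrase ``$/\text{(length factor)}$'' in your stated goal is superfluous (no normalisation appears), and the conditional expectation $\E(\|\Phi_s\|_{\gamma(H,X)}^2\mid\F_{s_{j-1}})$ you write on $(s_{j-1},s_j]$ is simply $\|\Phi_s\|_{\gamma(H,X)}^2$, since $\Phi_s$ is already $\F_{s_{j-1}}$-measurable there.

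It is worth noting that your symmetry trick is specific to Gaussian (more generally, conditionally symmetric) increments. The argument usually found in the literature instead passes through the equivalent reformulation of $(2,D)$-smoothness as the pointwise inequality $\|x+y\|^2 \le \|x\|^2 + \langle q'(x), y\rangle + D^2\|y\|^2$, where $q(x)=\|x\|^2$ (cf.\ the discussion around \eqref{eq:estqaccentx} and \cite[Lemma~2.2]{NV20}); the martingale property $\E(d_j\mid\F_{s_{j-1}})=0$ then kills the linear term for \emph{arbitrary} square-integrable martingale differences and yields the same constant $D$. Your route has the virtue of appealing only to the defining parallelogram-type inequality \eqref{eq:K}, at the cost of being tailored to the Brownian setting --- which is all the proposition requires.
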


Since the adapted finite rank step processes are dense in the closed subspace $L_{\bF}^2(\Om;L^2(0,T;\gamma(H,X)))$ consisting
of all progressively measurable
processes in $L^2(\Om;L^2(0,T;\gamma(H,X)))$, the estimate of Proposition \ref{prop:Neid} permits the extension of the stochastic integral to processes $\Phi\in L_{\bF}^2(\Omega;L^2(0,T;\gamma(H,X)))$.
By Doob's maximal inequality the resulting stochastic integral process $t\mapsto \int_0^t \Phi_s\ud W_s$ has a continuous version which satisfies the maximal estimate
\begin{align}\label{eq:maxp2}
\E \sup_{t\in [0,T]}\Big\n \int_0^t \Phi_s \ud W_s\Big\n^2 \le 4D^2 \|\Phi\|_{L^2(\Omega;L^2(0,T;\gamma(H,X)))}^2.
\end{align}
By a standard localization argument the mapping $\Phi \mapsto \int_0^\cdot \Phi_s\ud W_s$ can be extended to a continuous mapping from $L_{\bF}^0(\Omega;L^2(0,T;\gamma(H,X)))$ into $L^0(\Omega;C[0,T];X))$. Here, and in other instances below, the subscript $\bF$ designates the closed subspace of all progressively measurable process in a given space of processes.

In the scalar-valued setting it is a classical result of Burkholder, with later refinements by Davis and Gundy,
that the maximal inequality \eqref{eq:maxp2} admits an extension with $L^2$-norms over $\Omega$ replaced by $L^p$-norms
with constants of order $O(\sqrt{p})$ as $p\to\infty$.
The problem of extending the Burkholder--Davis--Gundy inequality to $2$-smooth Banach spaces has been considered by many authors \cite{Brz95, Brz97, Brz3, Dett89, Dett91, Ondrejat04}. The optimal asymptotic dependence of the constant in these inequalities for $p\to \infty$
was first obtained Seidler \cite{Sei}, who proved the following result.

\begin{theorem}[Seidler]\label{thm:Seid} Let $X$ be a $(2,D)$-smooth Banach space and let
$0<p<\infty$. For all $\Phi\in L_{\bF}^p(\Om;L^2(0,T;\gamma(H,X)))$
the process $(\int_0^t \Phi_s \ud W_s)_{t\in [0,T]}$ has a continuous version which satisfies
$$ \E \sup_{t\in [0,T]} \Big\n \int_0^t \Phi_s \ud W_s\Big\n^p\le C_{p,D}^p \|\Phi\|_{L^p(\Omega;L^2(0,T;\gamma(H,X)))}^p,
$$
where $C_{p,D}$ is a constant only depending on $p$ and $D$.
For $2\le p<\infty$ one may take $C_{p,D} = C_D \sqrt{p}$, where $C_D$ is a constant only depending on $D$.
\end{theorem}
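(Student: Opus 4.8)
The plan is to bootstrap from the $L^2$ maximal estimate \eqref{eq:maxp2} in two stages: first establish the inequality for $p\ge 2$ with a constant of order $O(\sqrt p)$, then descend to $0<p<2$ by a standard Lenglart-type domination argument. For the core case $2\le p<\infty$, the key idea is to combine the elementary one-sided It\^o estimate of Proposition \ref{prop:Neid} (together with its Doob-improved form \eqref{eq:maxp2}) with a martingale-type argument implemented on a dyadic time grid, tracking the $p$-dependence of all constants carefully. Concretely, fix a progressively measurable finite rank step process $\Phi$ (these are dense, so it suffices to prove the estimate for them). Write $M_t=\int_0^t\Phi_s\ud W_s$, a continuous $X$-valued martingale, and $S(\Phi)=\|\Phi\|_{L^2(0,T;\gamma(H,X))}$ for the (pathwise) square function. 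I would first prove the \emph{weak-type / good-$\lambda$} replacement of the $L^p$ bound, or alternatively run the following more direct route.

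\textbf{Step 1: Reduction to a fixed deterministic partition.} By approximating $\Phi$ we may assume $\Phi$ is constant on intervals $(t_{k-1},t_k]$ of a partition $0=t_0<\dots<t_N=T$. Then $(M_{t_k})_{k=0}^N$ is a discrete $X$-valued $L^p$-martingale with increments $d_k=M_{t_k}-M_{t_{k-1}}=\int_{t_{k-1}}^{t_k}\Phi_s\ud W_s$.

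\textbf{Step 2: From $2$-smoothness to a discrete Burkholder inequality with $O(\sqrt p)$ constant.} Since $X$ is $(2,D)$-smooth it has martingale type $2$, and the quantitative form of Pisier's theorem gives, for $2\le p<\infty$ and any $X$-valued $L^p$-martingale, $\E\|f_N\|^p\le (C_D\sqrt p)^p\,\E\bigl(\sum_{n=1}^N\|f_n-f_{n-1}\|^2\bigr)^{p/2}$; here the $\sqrt p$ comes from iterating \eqref{eq:K} and invoking the scalar Burkholder--Davis--Gundy constant asymptotics, or equivalently from the known fact that the martingale-type-$2$ constant of an $L^p$-version of a $(2,D)$-smooth space grows like $\sqrt p$. (If one prefers to stay self-contained, this step can be replaced by: apply \eqref{eq:K} along the martingale, take $p/2$-th powers, and use the scalar BDG inequality with its $O(\sqrt p)$ constant.)

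\textbf{Step 3: Controlling the discrete square function by the continuous one.} For each increment $d_k$, Proposition \ref{prop:Neid} conditionally on $\F_{t_{k-1}}$ yields $\E(\|d_k\|^2\mid\F_{t_{k-1}})\le D^2\,\E\bigl(\int_{t_{k-1}}^{t_k}\|\Phi_s\|_{\gamma(H,X)}^2\ud s\mid\F_{t_{k-1}}\bigr)$. Summing and using the conditional form repeatedly — this is where one uses that the grid can be refined — one passes from $\sum_k\|d_k\|^2$ to the genuine square function $S(\Phi)^2=\int_0^T\|\Phi_s\|_{\gamma(H,X)}^2\ud s$, at the cost of a further factor $D$; a careful version of this (e.g. via a stopping-time argument on a dyadically refined grid, letting the mesh tend to $0$) gives $\bigl(\E\, S_N(\Phi)^p\bigr)^{2/p}\lesssim D^2\bigl(\E\, S(\Phi)^p\bigr)^{2/p}$, where $S_N$ is the discrete square function of the $d_k$.

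\textbf{Step 4: Doob and passage to the supremum.} Combining Steps 2--3 bounds $\E\|M_{t_N}\|^p=\E\|M_T\|^p$ by $(C_D\sqrt p)^p\,\E\,S(\Phi)^p$. Doob's $L^p$-inequality applied to the real submartingale $\|M_t\|$ upgrades this to the maximal estimate $\E\sup_{t\in[0,T]}\|M_t\|^p\le\bigl(\tfrac{p}{p-1}\bigr)^p(C_D\sqrt p)^p\,\E\,S(\Phi)^p$; since $p/(p-1)\le 2$, the combined constant is still $O(\sqrt p)$, so $C_{p,D}=C_D\sqrt p$ works for $2\le p<\infty$. Then a density argument over $L^p_{\bF}(\Om;L^2(0,T;\gamma(H,X)))$, using that \eqref{eq:maxp2} already produces continuous versions, extends the bound to all $\Phi$ in this space.

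\textbf{Step 5: The range $0<p<2$.} Here I would use Lenglart's domination inequality: the continuous local martingale $M$ satisfies, for every stopping time $\tau$ and $\lambda,\delta>0$, a good-$\lambda$ estimate relating $\P(\sup_t\|M_t\|>\lambda)$ to $S(\Phi)$ via the $L^2$ (or weak-$L^1$) bound of Step 1--4 with $p=2$; Lenglart's lemma then yields $\E\sup_t\|M_t\|^p\le C_p\,\E\,S(\Phi)^p$ for all $0<p<2$, with some constant $C_{p,D}$ (no claim of $\sqrt p$-growth is needed here). Alternatively one invokes the standard extrapolation from a single $L^{p_0}$ maximal inequality for continuous martingales to all $0<p<\infty$.

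\textbf{Main obstacle.} The delicate point is Step 3 — transferring the \emph{one-sided} conditional It\^o estimate of Proposition \ref{prop:Neid} into a bound on the $L^p(\Om)$-norm of the square function $\bigl(\sum_k\|d_k\|^2\bigr)^{1/2}$ \emph{without} losing the sharp $\sqrt p$ asymptotics. Naively applying Proposition \ref{prop:Neid} termwise and then summing inside an $L^{p/2}$-norm is not enough, because the conditional expectations must be reassembled into $\E\bigl(\sum_k\E(\|d_k\|^2\mid\F_{t_{k-1}})\bigr)^{p/2}$, and comparing this predictable bracket with $\sum_k\|d_k\|^2$ requires another martingale (square-function) inequality, whose constant must itself be controlled as $O(\sqrt p)$ — and only the $2$-smoothness of $X$, not of some auxiliary space, is available. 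Getting these two uses of $2$-smoothness to interlock with a single factor of $\sqrt p$ (rather than $p$) is the heart of Seidler's argument, and is presumably handled by working on a dyadic grid and letting the mesh shrink, so that the discrete predictable bracket converges to the continuous quadratic variation $\int_0^T\|\Phi_s\|_{\gamma(H,X)}^2\ud s$ directly.
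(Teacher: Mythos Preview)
Your overall architecture---prove a discrete $L^p$-Burkholder inequality with an $O(\sqrt p)$ constant and then pass to continuous time---is exactly the route the paper takes. But you have misidentified where the work lies.

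\textbf{Step 2 is the theorem, not a lemma.} The inequality you invoke,
\[
\E\n f_N\n^p \le (C_D\sqrt p)^p\,\E\Bigl(\sum_{n=1}^N \n f_n - f_{n-1}\n^2\Bigr)^{p/2},
\]
is precisely the content of Pinelis's extension of the Burkholder--Rosenthal inequality to $(2,D)$-smooth spaces \cite{Pin}, and this is what the paper cites as \emph{the} proof of Theorem~\ref{thm:Seid}. It is not a ``quantitative form of Pisier's theorem'': Pisier's result is the qualitative equivalence of $2$-smoothness and martingale type $2$ at $p=2$, and says nothing about the $p$-dependence of constants. Your proposed self-contained alternative---iterate \eqref{eq:K} along the martingale, take $p/2$-th powers, and apply scalar BDG---does not work: iterating \eqref{eq:K} together with conditional symmetry gives a recursion for $\E(\n f_n\n^2\mid\F_{n-1})$, i.e.\ an inequality that holds only after taking (conditional) expectations, and one cannot simply raise such an inequality to the power $p/2$. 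Getting the $\sqrt p$ out of \eqref{eq:K} is genuinely delicate and is the substance of \cite{Pin}.

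\textbf{Step 3 is not the obstacle.} Once Pinelis's discrete inequality is in hand, the passage to continuous time is routine and does not require the conditional-bracket gymnastics you describe. For an adapted finite rank step process $\Phi$ one refines the grid; on each subinterval where $\Phi\equiv\phi$ the increments are $d_j=\sqrt{r_j-r_{j-1}}\sum_i\gamma_i^{(j)}x_i$ with independent standard Gaussians $\gamma_i^{(j)}$, so $\sum_j\n d_j\n^2\to\int\n\Phi_s\n_{\gamma(H,X)}^2\ud s$ almost surely and in $L^{p/2}$ by a law-of-large-numbers/quadratic-variation argument in the finite-dimensional range of $\Phi$. No second use of $2$-smoothness is needed here, and hence no risk of picking up an extra factor of $\sqrt p$.

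Your Steps 4 and 5 are correct; in particular the extrapolation to $0<p<2$ via Lenglart is exactly what the paper points to in Remark~\ref{rem:extrapol}.
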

The proof is based on an extension to $2$-smooth Banach spaces of the classical Burkholder--Rosenthal inequality due to Pinelis \cite{Pin}. Tracking and optimising constants in this reference one finds that the choice $C_D = 10D$ will do (see \cite{NV20} for the details).

\section{Maximal inequalities for stochastic convolutions}\label{sec:maxstochasticconv}

A family $(S(t,s))_{0\le s\le t\le T}$ of bounded linear operators on a Banach space
$X$ is called a {\em $C_0$-evolution family} indexed by $[0,T]$ if the following conditions are satisfied:
\begin{enumerate}[(1)]
\item $S(t,t) = I$  for all $t\in [0,T]$;
\item $S(t,r) = S(t,s) S(s,r)$ for all $0\le r\le s\le t\le T$;
\item the mapping $(t,s) \to S(t,s)$ is strongly
continuous on the set $\{0\le s\le t\le T\}$.
\end{enumerate}

Under the assumption that the Banach space $X$ is $(2,D)$-smooth, for processes $g \in L_{\bF}^0(\Om;L^2(0,T;\gamma(H,X)))$
we consider the stochastic convolution process $(u_t)_{t\in [0,T]}$ defined by
%%%
\begin{align*}%\label{eq:defug}
 u_t:= \int_0^t S(t,s)g_s \ud W_s, \qquad t\in [0,T].
 \end{align*}
As explained in the Introduction, the nomenclature ``stochastic convolution'' is justified by the important special case where the evolution family arises from a semigroup of operators.

The remainder of this paper is dedicated to surveying the following two problems:
\begin{itemize}
 \item to find conditions guaranteeing that $u$ has a continuous version which satisfies the Burkholder type $L^p$-maximal inequality
\begin{align}\label{eq:Lpmaxest}
\E\sup_{t\in [0,T]}\n u_t\n^p\leq C_{p,X}^p \|g\|_{L^p(\Omega;L^2(0,T;\gamma(H,X)))}^p;
\end{align}
\item if this is the case, to determine whether the constant $C_{p,X}$ is of order $O(\sqrt{p})$ as $p\to\infty$.
\end{itemize}
In \eqref{eq:Lpmaxest} and in the rest of the paper, we do not distinguish notationally between $u$ and its continuous version.
The right hand side of \eqref{eq:Lpmaxest} is motivated by Theorem \ref{thm:Seid}, which gives \eqref{eq:Lpmaxest}
in the special case of the trivial family $S(t,s) \equiv I$ with $O(\sqrt{p})$ dependence  of the constant as $p\to\infty$.

A number of general remarks can be made at this point.

\begin{remark}
In many applications the evolution family is generated by a family $(A(t))_{t\in [0,T]}$ of closed operators on $X$, in the sense  made precise in Subsection \ref{sec:maxineq-contr}\ref{subsec:Ito}. In this case the process $u$ can be interpreted as the mild solution to the stochastic differential equation
\begin{align}\label{eq:SDE}
\ud u_t = A(t) u_t \ud t + g_t \ud W_t, \qquad u(0) = 0.
\end{align}
If $u$ is a {\em strong solution} of \eqref{eq:SDE}, i.e., if for all $t\in [0,T]$ one has that $t\mapsto A(t) u_t$ belongs to $ L^1(0,t;X)$ almost surely and
\begin{align}\label{eq:strongsol}
u_t = \int_0^t A(s) u_s \ud s +\int_0^t  g_s  \ud W_s \ \ \ \hbox{almost surely}
\end{align}
(by the stochastic Fubini theorem this happens, e.g.,
when $u_t$ is $\Dom(A(t))$-valued and both $u$ and $Au$ belong to $L_{\bF}^0(\Omega;L^1(0,T;X))$),
then it is easy to see that $u$ has a continuous version, namely the process defined by the right-hand side of \eqref{eq:strongsol} once a continuous version of the stochastic integral $\int_0^t  g_s  \ud W_s$ has been selected.
\end{remark}

\begin{remark}\label{rem:extrapol}
If $u$ has a version satisfying \eqref{eq:Lpmaxest} for all $g\in L_{\bF}^p(\Omega;L^q(0,T;\gamma(H,X)))$,
for certain fixed $0<p<\infty$ and $1\le q\le \infty$,
a standard localisation argument shows that for all $g\in L_{\bF}^0(\Omega;L^q(0,T;\gamma(H,X)))$ the process $u$ has a continuous version. Moreover, an application of Lenglart's inequality \cite[Proposition IV.4.7]{RY} implies that \eqref{eq:Lpmaxest} (with $p$ replaced by $r$) extends to all exponents $0<r\le p$.
\end{remark}

For general $C_0$-evolution families, and even for $C_0$-semigroups, the problem of proving
the existence of a continuous version is open even when $X$ is a Hilbert space. In Subsections \ref{sec:maxstochasticconv}\ref{subsec:factorisation} and \ref{sec:maxstochasticconv}\ref{subsec:dilations}
we will discuss two techniques to approach this problem: the factorisation method of Da Prato, Kwapie\'n, and Zabczyk, and the
dilation method of Hausenblas and Seidler. Both methods also lead to maximal inequalities. In the case of the factorisation method
this inequality is weaker than \eqref{eq:Lpmaxest}; the dilation method gives \eqref{eq:Lpmaxest} with optimal asymptotic dependence of the constant.
In Section \ref{sec:maxineq-contr} we will see that for $C_0$-evolution families of {\em  contractions}, a continuous version always exists and \eqref{eq:Lpmaxest} holds with optimal asymptotic dependence of the constant.

One of the reasons for insisting on asymptotic $O(\sqrt{p})$-dependence of the constant is that it implies Gaussian tail estimates.
This is an immediate consequence of the special case $\alpha=2$ of following elementary lemma.

\begin{lemma}\label{lem:expesttail}
Let $\xi$ be a non-negative random variable and suppose there exist $\alpha>0$ and $C\ge 1$ such that
$\E\xi^p\leq C^p p^{p/\alpha}$ for all $p\geq \alpha.$
Then setting  $\sigma^2= e C^{\alpha}$ one has
$$\P(\xi\geq r) \leq 3\exp(-r^\alpha/(\alpha \sigma^2)), \qquad r>0.$$
\end{lemma}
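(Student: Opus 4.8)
The plan is to estimate the tail $\P(\xi \geq r)$ by Markov's inequality applied to $\xi^p$ and then optimize over the free exponent $p$. Fix $r > 0$. For any $p \geq \alpha$, Markov's inequality together with the hypothesis gives
\[
\P(\xi \geq r) = \P(\xi^p \geq r^p) \leq \frac{\E \xi^p}{r^p} \leq \frac{C^p p^{p/\alpha}}{r^p} = \Bigl(\frac{C p^{1/\alpha}}{r}\Bigr)^p.
\]
Now I would choose $p$ to make the base $C p^{1/\alpha}/r$ as small as possible relative to the exponent; the natural choice is $p^{1/\alpha} = r/(C e^{1/\alpha})$, i.e. $p = r^\alpha/(e C^\alpha) = r^\alpha/\sigma^2$, which makes the base equal to $e^{-1/\alpha}$ and yields a bound of $\exp(-p/\alpha) = \exp(-r^\alpha/(\alpha\sigma^2))$.

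The only real wrinkle — and the main (minor) obstacle — is that this optimal $p$ is only admissible when $p \geq \alpha$, i.e. when $r^\alpha \geq \alpha \sigma^2 = \alpha e C^\alpha$, whereas the claimed inequality must hold for all $r > 0$. So I would split into two cases. For $r^\alpha \geq \alpha e C^\alpha$ the computation above applies verbatim and in fact gives the cleaner bound $\exp(-r^\alpha/(\alpha\sigma^2))$ without the factor $3$. For small $r$, namely $r^\alpha < \alpha e C^\alpha = \alpha \sigma^2$, one simply uses the trivial bound $\P(\xi \geq r) \leq 1$ and checks that the right-hand side $3\exp(-r^\alpha/(\alpha\sigma^2))$ exceeds $1$: indeed in this range $r^\alpha/(\alpha\sigma^2) < 1$, so $3\exp(-r^\alpha/(\alpha\sigma^2)) > 3 e^{-1} > 1$. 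Combining the two cases gives the stated estimate for all $r > 0$, with the constant $3$ absorbing the small-$r$ regime.

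One should double-check the edge case $p = \alpha$ (allowed, since the hypothesis is stated for $p \geq \alpha$) and the fact that $C \geq 1$ is what guarantees $\sigma^2 = eC^\alpha \geq e > 1$, so that the threshold $\alpha\sigma^2$ is positive and the case distinction is well posed; neither point causes trouble. An alternative to the case split would be to optimize only over $p \in [\alpha, \infty)$ honestly — taking $p = \max(\alpha, r^\alpha/\sigma^2)$ — but the two-case presentation above is cleaner and makes the role of the constant $3$ transparent.
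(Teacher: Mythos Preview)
Your proof is correct and follows essentially the same approach as the paper: Markov's inequality with the moment bound, the optimal choice $p = r^\alpha/(eC^\alpha) = r^\alpha/\sigma^2$ (which is exactly the paper's $p = e^{-1}(r/C)^\alpha$), and the same case split at $r^\alpha = \alpha\sigma^2$ with the trivial bound $1 \le 3e^{-1}$ handling the small-$r$ regime. The only difference is presentational.
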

\begin{proof}
By Markov's inequality,
$\P(\xi\geq r)\leq r^{-p} \E\xi^p\leq (C/r)^p p^{p/\alpha}.$
If $e^{-1}(r/C)^{\alpha}\geq \alpha$ we can set $p = e^{-1}(r/C)^{\alpha}$ to obtain $\P(\xi\geq r) \leq e^{-p/\alpha} = \exp(-r^\alpha/(\alpha \sigma^2))$.
If $e^{-1}(r/C)^{\alpha}<\alpha$, then $\P(\xi\geq r)\leq 1\leq 3e^{-1}\leq 3\exp(-r^\alpha/(\alpha \sigma^2))$.
\end{proof}

Indeed, applying the lemma to $\xi = \sup_{t\in [0,T]}\n \int_0^t S(t,s)g_s \ud W_s\n$ and $\alpha=2$ we obtain the following general result:

\begin{corollary}\label{cor:contractionS-new}
Let $(S(t,s))_{0\le s\le t\le T}$ be a $C_0$-evolution family of contractions on a $(2,D)$-smooth Banach space $X$
and let $g\in L_{\bF}^\infty(\Omega;L^q(0,T;\gamma(H,X)))$ with  $1\le q\le \infty$. If the maximal inequality
\[\E\sup_{t\in [0,T]}\Big\n \int_0^t S(t,s)g_s \ud W_s\Big\n^p\leq C^p (\sqrt{p})^p \|g\|_{L^\infty(\Omega;L^q(0,T;\gamma(H,X)))}^p\]
holds for all $2\le p<\infty$, where $C$ is a constant independent of $p$,
then the process $(\int_0^t S(t,s)g_s \ud W_s)_{t\in [0,T]}$ has a continuous version which satisfies the Gaussian tail estimate
\[\P\Bigl(\sup_{t\in [0,T]}\Big\n \int_0^t S(t,s)g_s \ud W_s\Big\n\geq r\Bigr) \leq 2\exp\Bigl(-\frac{r^2}{2\sigma^2}\Bigr), \qquad r>0,\]
where $\sigma^2 = eC^2\n g\n_{L^\infty(\Omega;L^q(0,T;\gamma(H,X)))}^2$.
\end{corollary}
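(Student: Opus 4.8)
The plan is to read the statement off Lemma~\ref{lem:expesttail} with $\alpha=2$, the only genuine work being the construction of a pathwise continuous version of $u_t=\int_0^t S(t,s)g_s\ud W_s$; the hypothesised maximal inequality is precisely the input this construction needs.

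\emph{Continuous version.} First I would approximate $g$ by adapted finite rank step processes $g^{(n)}$. For such a step process the convolution is pathwise continuous: on each subinterval $(s_{j-1},s_j]$ of the underlying partition one integrates $s\mapsto S(t,s)(\cdot)$ against a Brownian motion, and after replacing $S(t,s)$ by the piecewise-constant-in-$s$ approximant $S(t,s_{j-1})$ (harmless by strong continuity of $(S(t,s))_{0\le s\le t\le T}$) one is left with a finite sum of terms $t\mapsto S(t,s_{j-1})\bigl[(W_{s_j\wedge t}-W_{s_{j-1}\wedge t})h_i\otimes\xi_{ij}\bigr]$, each continuous in $t$ by strong continuity of $(S(t,s))$ and path continuity of $t\mapsto W_th$. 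Using the hypothesised maximal inequality to control the suprema $\sup_{t\in[0,T]}\n u_t-u^{(n)}_t\n$ (after the customary stopping-time localisation on $g$), one then passes, exactly as in the localisation argument of Remark~\ref{rem:extrapol}, to a continuous limiting process; this is the sought version of $u$, and it inherits the bounds $\E\sup_{t\in[0,T]}\n u_t\n^p\le C^p p^{p/2}\n g\n_{L^\infty(\Om;L^q(0,T;\gamma(H,X)))}^p$ for every $2\le p<\infty$. I expect this to be the main obstacle: for a general $C_0$-evolution family pathwise continuity is open, and what makes it available here is exactly that a maximal inequality has been postulated.

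\emph{Gaussian tail.} With the continuous version fixed, set $\xi:=\sup_{t\in[0,T]}\n u_t\n$ and $c:=C\n g\n_{L^\infty(\Om;L^q(0,T;\gamma(H,X)))}$; we may assume $c>0$, the case $c=0$ being trivial. Then $\E\xi^p\le c^p p^{p/2}$ for all $p\ge2$, and, writing $\sigma^2:=ec^2=eC^2\n g\n_{L^\infty(\Om;L^q(0,T;\gamma(H,X)))}^2$, Lemma~\ref{lem:expesttail} with $\alpha=2$ (applied to $\xi/c$) already gives $\P(\xi\ge r)\le3\exp(-r^2/(2\sigma^2))$ for all $r>0$. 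To obtain the stated constant $2$ I would argue directly. By Markov's inequality $\P(\xi\ge r)\le(c\sqrt p/r)^p$ for every $p\ge2$. If $r^2\le2\sigma^2\log2$ then $2\exp(-r^2/(2\sigma^2))\ge1\ge\P(\xi\ge r)$ and there is nothing to prove. If $r^2>2\sigma^2\log2$, put $x:=r^2/c^2$, so that $x>2e\log2$; when $x\ge2e$ the admissible choice $p=x/e$ gives $\P(\xi\ge r)\le e^{-x/(2e)}=\exp(-r^2/(2\sigma^2))$, and when $2e\log2<x<2e$ the choice $p=2$ gives $\P(\xi\ge r)\le2/x\le2\exp(-r^2/(2\sigma^2))$, because $x\mapsto xe^{-x/(2e)}$ is increasing on $(0,2e)$ and already equals $e\log2>1$ at $x=2e\log2$. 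In all cases $\P(\xi\ge r)\le2\exp(-r^2/(2\sigma^2))$, which is the assertion. Apart from the first step this is just the elementary computation behind Lemma~\ref{lem:expesttail} with a slightly more careful optimisation over $p$; if one is content with the constant $3$ (or even $e$), the tail estimate is a one-line consequence of that lemma.
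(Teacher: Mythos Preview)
Your tail-estimate argument is correct and coincides with the paper's: the corollary is obtained as the $\alpha=2$ case of Lemma~\ref{lem:expesttail}, and your additional optimisation over $p$ to sharpen the constant from $3$ to $2$ is a valid refinement (indeed the paper states the corollary with constant $2$ but only invokes the lemma, which literally gives $3$).

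The part that does not go through is your construction of the continuous version. Two issues. First, your claim that the convolution with an adapted finite-rank step process is pathwise continuous is not established by the argument you give. Replacing $S(t,s)$ by $S(t,s_{j-1})$ on $(s_{j-1},s_j]$ is \emph{not} harmless: it changes the stochastic integral, and the resulting error $\int_{s_{j-1}}^{s_j\wedge t}[S(t,s)-S(t,s_{j-1})]R_j\ud W_s$ is not of the form $\int_0^\cdot S(\cdot,s)h_s\ud W_s$ for any $h$, so the hypothesised maximal inequality cannot control it. Even for step processes, pathwise continuity of $t\mapsto\int_0^t S(t,s)g_s\ud W_s$ is non-trivial for a general $C_0$-evolution family---this is precisely the open problem the paper flags. (What the evolution property does give is that for $t\ge s_j$ the $j$-th block equals $S(t,s_j)\int_{s_{j-1}}^{s_j}S(s_j,s)R_j\ud W_s$, continuous in $t$; the difficulty is the running block $t\mapsto\int_{s_{k-1}}^t S(t,s)R_k\ud W_s$.) Second, the hypothesised inequality carries $\n g\n_{L^\infty(\Omega;L^q)}$ on the right, and adapted finite-rank step processes are not dense in $L^\infty_\bF(\Omega;L^q(0,T;\gamma(H,X)))$. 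So even granting continuity for step processes, you cannot make $\n g-g^{(n)}\n_{L^\infty(\Omega;L^q)}$ small, and the passage to general $g$ via $\sup_t\n u_t-u^{(n)}_t\n$ fails.

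The paper does not labour this point either; it treats the continuous version as already available in the ambient setting of contractive evolution families on $(2,D)$-smooth spaces (this is Proposition~\ref{prop:contractionS}, which covers $q\ge2$; for $q<2$ the stochastic integral is not even defined within the present framework). With that in hand, the corollary is indeed the one-line application of Lemma~\ref{lem:expesttail} that you describe.
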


This method of getting Gaussian tail estimates gives rather poor bounds on the variance. In Subsection \ref{subsec:Ito} we will discuss another method which, when applied to Theorem \ref{thm:contractionS-new}, gives a bound that is close to being optimal.

\subsection{The factorisation method}\label{subsec:factorisation}

The so-called factorisation method was introduced by Da Prato, Kwapie\'n, and Zabczyk \cite{DPKZ} to prove the existence of
a continuous version for stochastic convolutions with $C_0$-semigroups defined on a Hilbert space and was extended
to $C_0$-evolution families by Seidler \cite{Sei93}.
It is based on the formula
$$\int_{r}^t (t-s)^{\alpha-1} (s-r)^{-\alpha} \ud s = \frac{\pi}{\sin(\pi\alpha)},$$
from which one deduces the following identity, valid for $0<\alpha<\frac12$:
$$\frac{\pi}{\sin \pi \alpha}\int_0^t S(t,s) g_s \ud W_s =
\int_0^t (t-s)^{\alpha-1} S(t,s) \Bigl( \int_0^s (s-r)^{-\alpha}S(s,r) g_r \ud W_r\Bigr)\ud r.$$
For $2<p<\infty$ and $\frac1p<\alpha<\frac12$ the process $R_\alpha(s):= \int_0^s (s-r)^{-\alpha}S(s,r) g_r \ud W_r$ belongs to
$L^p(0,T;L^p(\Om;X))$, which we identify with $L^p(\Omega;L^p(0,T;X))$, and then use the fact that the mapping
$ R_\alpha\mapsto \int_0^t (t-s)^{\alpha-1} S(t,s) R_\alpha(s)\ud s$ maps the latter space into $L^p(\Om;C([0,T];X))$.
{\em Mutatis mutandis} this method extends to the more general setting of $2$-smooth Banach spaces. By bookkeeping the norm estimates and tracking constants, and performing a standard localisation argument, the following result is obtained.

\begin{theorem}[Factorisation]\label{thm:fact} Let $(S(t,s))_{0\le s\le t\le T}$ be a $C_0$-evolution family on a $(2,D)$-smooth Banach space $X$. For all $g\in L_{\bF}^0(\Omega;L^q(0,T;\gamma(H,X)))$ with $2<q<\infty$
the process $(\int_0^t S(t,s)g_s \ud W_s)_{t\in [0,T]}$ has a continuous version.
For $g\in L_{\bF}^p(\Omega;L^q(0,T;\gamma(H,X)))$ with $0<p\le q$, this version satisfies
\begin{align*}
\E\sup_{t\in [0,T]}\Big\n \int_0^t S(t,s)g_s \ud W_s\Big\n^p \leq C_{p,q,D,T}^p M^p\|g\|_{L^p(\Omega;L^q(0,T;\gamma(H,X)))}^p,
\end{align*}
where $M = \sup_{0\le s\le t\le T}\|S(t,s)\|$.
For $p=q$ one may take $C_{p,p,D,T} = D K_{p}\sqrt{p}T^{\frac{1}{2}-\frac{1}{p}}$ with $\limsup_{p\to \infty} K_p<\infty$.
\end{theorem}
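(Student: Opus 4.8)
The plan is to implement the factorisation identity rigorously and then estimate the two pieces separately. First I would fix $0<\alpha<\tfrac12$ with $\alpha>\tfrac1q$ (possible since $q>2$), and verify the factorisation identity
\[
\frac{\pi}{\sin \pi \alpha}\int_0^t S(t,s) g_s \ud W_s =
\int_0^t (t-s)^{\alpha-1} S(t,s) R_\alpha(s)\ud s,
\]
where $R_\alpha(s):= \int_0^s (s-r)^{-\alpha}S(s,r) g_r \ud W_r$. The identity follows from the scalar Beta-integral $\int_r^t (t-s)^{\alpha-1}(s-r)^{-\alpha}\ud s = \pi/\sin\pi\alpha$ combined with the stochastic Fubini theorem and the evolution property $S(t,s)S(s,r)=S(t,r)$; the integrability hypotheses needed for stochastic Fubini are checked below as a by-product of the norm bounds. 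By the localisation remarks (Remark \ref{rem:extrapol}) it suffices to treat $g\in L^p_{\bF}(\Omega;L^q(0,T;\gamma(H,X)))$ with $0<p\le q$, and by Lenglart's inequality it suffices to obtain the $L^p$-maximal bound for $p=q$; the case $p=q$ is where the explicit constant $DK_p\sqrt p\, T^{1/2-1/p}$ is extracted.

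The second step bounds $R_\alpha$ in $L^q(\Omega;L^q(0,T;X)) \simeq L^q(0,T;L^q(\Omega;X))$. For each fixed $s$, Seidler's inequality (Theorem \ref{thm:Seid}) applied to the integrand $r\mapsto \one_{(0,s)}(r)(s-r)^{-\alpha}S(s,r)g_r$ gives
\[
\E\|R_\alpha(s)\|^q \le (C_D\sqrt q)^q M^q \Bigl(\int_0^s (s-r)^{-2\alpha}\|g_r\|_{\gamma(H,X)}^2\ud r\Bigr)^{q/2}.
\]
Integrating in $s$ over $[0,T]$, applying the Hardy--Littlewood--Sobolev or simply Minkowski/Young inequality for the fractional-integral-type kernel $(s-r)^{-\alpha}$ acting on $L^{q/2}$, one gets $\|R_\alpha\|_{L^q(0,T;L^q(\Omega;X))}^q \le (C_D\sqrt q)^q M^q c_{\alpha,q,T}^q \|g\|_{L^q(\Omega;L^q(0,T;\gamma(H,X)))}^q$ with $c_{\alpha,q,T}$ carrying a power of $T$ from the finite-interval convolution kernel. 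One should be slightly careful that the HLS/Young argument is applied to the scalar kernel after pulling the $\gamma(H,X)$-norm inside via Minkowski, and that the time-exponent in $c_{\alpha,q,T}$ is tracked; here one uses $\alpha>1/q$ precisely so that $(s-r)^{-2\alpha}$ is integrable against $L^{q/2}$ in the appropriate sense near the diagonal after the square-function reduction — more precisely so that $r\mapsto(s-r)^{-\alpha}$ lies in $L^{q'}$-type classes on $(0,T)$.

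The third step is the deterministic maximal estimate for the map $R_\alpha \mapsto \int_0^t (t-s)^{\alpha-1}S(t,s)R_\alpha(s)\ud s$ from $L^q(0,T;X)$ to $C([0,T];X)$. Since $\alpha<\tfrac12<1-\tfrac1q$, i.e.\ $(\alpha-1)q'>-1$, the kernel $(t-s)^{\alpha-1}$ is in $L^{q'}(0,T)$, so by Hölder
\[
\Bigl\|\int_0^t (t-s)^{\alpha-1}S(t,s)R_\alpha(s)\ud s\Bigr\| \le M \Bigl(\int_0^T s^{(\alpha-1)q'}\ud s\Bigr)^{1/q'}\Bigl(\int_0^T\|R_\alpha(s)\|^q\ud s\Bigr)^{1/q},
\]
uniformly in $t$, and strong continuity of $(t,s)\mapsto S(t,s)$ together with $L^{q'}$-integrability of the kernel gives continuity of the path (a standard dominated-convergence argument splitting the singularity at $s=t$). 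Combining the three steps, taking expectations, and collecting the constants $\pi/\sin\pi\alpha$, the $T$-powers from the two kernel integrals, and the factor $C_D\sqrt q = 10D\sqrt q$ from Theorem \ref{thm:Seid}, yields the claimed bound with $C_{p,p,D,T} = DK_p\sqrt p\, T^{1/2-1/p}$ and $K_p$ bounded as $p\to\infty$; the boundedness of $K_p$ follows since the optimal choice of $\alpha=\alpha(p)$ can be taken bounded away from $0$ and $\tfrac12$ (e.g.\ $\alpha=\tfrac14$ works for all $p\ge 4$), so the $\alpha$-dependent factors $\pi/\sin\pi\alpha$ and the Beta-integral constants stay bounded, and the $T$-exponent $\tfrac12-\tfrac1p = \tfrac1{q'}+(\alpha-1)+\tfrac1q-\alpha$ bookkeeps correctly.

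The main obstacle I expect is the careful tracking of the $T$-power and the verification that the resulting $K_p$ stays bounded: one must choose $\alpha$ uniformly in $p$ (not $\alpha\to\tfrac1p$), reconcile the $T^{1/q'}$ from the $(t-s)^{\alpha-1}$ kernel with the $T$-power emerging from the $(s-r)^{-\alpha}$ convolution in step two, and confirm these combine to exactly $T^{1/2-1/p}$. A secondary technical point is justifying the stochastic Fubini theorem for the factorisation identity in the $\gamma(H,X)$-valued setting, which requires the $L^p$-integrability estimates from step two to already be in hand — so in practice one proves the bound for $R_\alpha$ first and then uses it both to license Fubini and to run step three.
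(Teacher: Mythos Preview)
Your proposal is correct and follows essentially the same approach as the paper, which in fact only sketches the proof in the paragraph preceding the theorem: factorisation identity via the Beta integral and stochastic Fubini, Seidler's inequality to place $R_\alpha$ in $L^q(0,T;L^q(\Omega;X))$, then the deterministic H\"older bound for the map $R_\alpha\mapsto\int_0^t(t-s)^{\alpha-1}S(t,s)R_\alpha(s)\,{\rm d}s$ into $C([0,T];X)$, with Lenglart/localisation handling $0<p<q$. One small clarification: the condition $\alpha>1/q$ is actually needed only in your third step (so that $(t-s)^{\alpha-1}\in L^{q'}$), whereas your second step requires only $\alpha<1/2$ (Young's inequality with the $L^1$ kernel $u\mapsto u^{-2\alpha}$ acting on $L^{q/2}$); your paragraph on step two conflates these, but since you impose both constraints from the outset nothing is lost, and your $T$-exponent bookkeeping $(1/2-\alpha)+(\alpha-1/q)=1/2-1/p$ and the boundedness of $K_p$ via a fixed $\alpha$ (e.g.\ $\alpha=1/4$ for large $p$) are correct.
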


It is important to observe that the estimate is phrased in terms of the norm of $L^p(\Omega;L^q(0,T;\gamma(H,X)))$, rather than
 $L^p(\Omega;L^2(0,T;\gamma(H,X)))$ as in the Burkholder type estimate \eqref{eq:Lpmaxest}. On the other hand, in contrast to the results of Section \ref{sec:maxineq-contr} where contractivity is required, Theorem \ref{thm:fact} is applicable to arbitrary $C_0$-evolution families.

\subsection{The dilation method}\label{subsec:dilations}

In this subsection we discuss an abstract version of a dilation technique due to the Hausenblas and Seidler \cite{HauSei1, HauSei2}.
In their original formulation for $C_0$-contraction semigroups on Hilbert spaces, the key idea is
to use the Sz.-Nagy dilation theorem \cite{Nagybook} to dilate the semigroup to a unitary $C_0$-group $(U(t))_{t\in \R}$ on a larger Hilbert space. Extending $g$ to this larger Hilbert space as well and using the group property to write
$$ \int_0^t U(t-s)g_s \ud W_s = U(t)\int_0^t U(-s)g_s \ud W_s,$$
the stochastic integral on right-hand side can be estimated by means of Theorem \ref{thm:Seid}, or rather, its special case for Hilbert spaces $X$. This then gives the result. Still in the setting of Hilbert spaces $X$, the method can be extended {\em mutatis mutandis} to the situation where $g \ud W$ is replaced by an arbitrary $X$-valued continuous local martingale.

There is no obvious way to extend the Hausenblas--Seidler argument to general $C_0$-semigroups or to $C_0$-evolution families. Moreover, the Sz.-Nagy dilation theorem is a Hilbert space theorem. To overcome both problems, the next definition introduces an abstract dilation framework.

\begin{definition}\label{def:dilation}
A $C_0$-evolution family $(S(t,s))_{0\leq s\leq t\leq T}$ on a Banach space $X$ is said to:
\begin{enumerate}[\rm (1)]
\item {\em admit an invertible dilation on the Banach space $Y$}, if there exist strongly continuous functions $J:[0,T]\to \calL(X,Y)$ and $Q:[0,T]\to \calL(Y,X)$ such that
    \[S(t,s) = Q(t) J(s) \ \ \ \text{for all} \ \ 0\le s\le t\le T.\]
\item {\em admit an approximate invertible dilation on the sequence of Banach spaces $(Y_n)_{n\geq 1}$}, if there exist strongly continuous functions $J_n:[0,T]\to \calL(X,Y_n)$ and $Q_n:[0,T]\to \calL(Y_n,X)$ such that 
    \[\sup_{n\geq 1}\sup_{t\in [0,T]}\|J_n(t)\|<\infty, \qquad \sup_{n\geq 1}\sup_{t\in [0,T]}\|Q_n(t)\|<\infty,\]
    and
    \[S(t,s)x = \lim_{n\to \infty} Q_n(t) J_n(s)x \ \ \ \text{for all} \ \ 0\le s\le t\le T \ \ \hbox{and} \ \ x\in X.\]
\end{enumerate}
\end{definition}

\begin{example}\label{ex:extransf1}
A sufficient condition for the existence of an invertible dilation is that every operator $S(t,s)$ be invertible, in which case we can take $Y=X$, $Q(t) = S(t,0)$, and $J(s) = S(s,0)^{-1}$.
\end{example}
\begin{example}\label{ex:extransf2} A $C_0$-semigroup $(S(t))_{t\ge 0}$ is said to {\em dilate to a $C_0$-group}
if there exist a $C_0$-group $(U(t))_{t\in \R}$ on a Banach space $Y$ and bounded operators $J\in \calL(X,Y)$ and $Q\in \calL(Y,X)$  such that $S(t) = Q U(t)J$ for all $t\ge 0$. In this case the operators $Q(t) := Q U(t)$ and $J(s) := U(-s) J$ define an invertible dilation in the sense of Definition \ref{def:dilation}.
In cases of interest, it is often possible to construct group dilations which preserve certain features of interest:

\begin{itemize}
\item If $(S(t))_{t\ge 0}$ is a $C_0$-semigroup of contractions on a Hilbert space $X$, then a unitary group dilation exists on a Hilbert space $Y$. This is the content of the Sz.-Nagy dilation theorem.
\item If $(S(t))_{t\ge 0}$ is a $C_0$-semigroup of positive contractions on an $L^p$-space with $1<p<\infty$, then a group dilation of positive contractions exists on another $L^p$-space. This is the content of Fendler's theorem \cite{Fen}.
\item If the negative generator $-A$ has a bounded $H^\infty$-calculus on of angle $<\frac12\pi$ on any Banach space $X$,
then a group dilation exists on the Banach space $\gamma(L^2(\R),X)$. This result is essentially due to \cite{FW} and stated in its present form in \cite{HNVW4}. If $X$ is $2$-smooth, then so is $\gamma(L^2(\R),X)$.
\end{itemize}

Further dilation results can be found in \cite{AFLM, FaGl, LeMerdy96}.
As far as we know, no extensions of these results are known for evolution families.
We also do not know whether every $C_0$-semigroup has an (approximate) invertible dilation in the sense of  Definition \ref{def:dilation}, or whether in the cases that such a dilation exists there also exists a group dilation.
Here it is important that the space $Y$ should enjoy similar geometric properties as $X$, such as Hilbertianity, $2$-smoothness, or  UMD.
\end{example}

\begin{example} We now give an example where an approximate dilation can be constructed.
Let $X$ and $X_1$ be Hilbert spaces, with $X_1$ continuously and densely embedded in $X$, and let $A\in C([0,T];\calL(X_1,X))$ be such that there exist constants $c>0$ and $C\ge 0$ such that
\[c\|x\|_{X_1} \leq \|x\|_X + \|A(t)x\|_X\leq C\|x\|_{X_1}, \quad t\in [0,T], \ x\in X_1.\]
Suppose further that for all $t\in [0,T]$ the operator $A(t)$ generates a $C_0$-contraction semigroup $(S^t(s))_{s\geq 0}$ and that for all $s_0, s_1, ,t_0, t_1\in [0,T]$ the operators $S^{t_0}(s_0)$ and $S^{t_1}(s_1)$ and their adjoints commute.
Then $A$ generates a $C_0$-evolution family $(S(t,s))_{0\le s\le t\le T}$ of contractions on $X$
in the sense of \cite[Theorem 5.3.1]{Pazy} or \cite[Theorem 4.4.1]{Ta1}.

Setting $t_k^n = \frac{kT}{n}$ and $I_k^n = [t_k^n, t_{k+1}^n)$ (with endpoint included if $k=n-1$),
from the proof of the theorems just cited one infers
$S(t,s)x = \lim_{n\to \infty} S_n(t,s)x$ for all $0\leq s\leq t\leq T$ and $x\in X$, where
\[S_n(t,s) = \left\{
                \begin{array}{ll}
                  S^{t_k^n}(t-s), & \hbox{$s,t\in I_k^n$;} \\
                  S^{t_\ell^n}(t-t_\ell^n)\Big(\prod_{j=k+1}^{\ell-1} S^{t_j^n}(T/n)\Big)S^{t_{k}^n}(s-t_k^n),
& \hbox{$s\in I_{k}^n, t\in I_{\ell}^n, k\le \ell$.}
                \end{array}
              \right.
 \]
It is easy to check that $(S_n(t,s))_{0\le s\le t\le T}$ is a $C_0$-evolution family of contractions. By the assumption that the contraction semigroups $(S^t(s))_{s\ge 0}$ commute
among themselves and with their adjoints, it follows from \cite[Proposition 9.2]{Nagybook}
that there exist a Hilbert space $Y$ and contractions $J\in \calL(X,Y)$ and $Q\in \calL(Y,X)$, as well as commuting isometric $C_0$-groups $(U^t(s))_{s\in \R}$ on $Y$ such that for all $s_1, \ldots, s_n$ and $ t_1, \ldots, t_n\in [0,T]$ we have
\begin{align}\label{eq:dilationcomm}
 S^{t_1}(s_1) \ldots S^{t_n}(s_n) = Q  U^{t_1}(s_1) \ldots U^{t_n}(s_n) J.
\end{align}
For  for $0\le s\le t\le T$ we define the operators $U_n(t,s)$ by
\[U_n(t,s) := \left\{
                \begin{array}{ll}
                  U^{t_k^n}(t-s), & \hbox{$s,t\in I_k^n$;} \\
                  U^{t_\ell^n}(t-t_\ell^n)\Big(\prod_{j=k+1}^{\ell-1} U^{t_j^n}(T/n)\Big)U^{t_{k}^n}(s-t_k^n),
& \hbox{$s\in I_{k}^n, t\in I_{\ell}^n, k\le \ell$.}
                \end{array}
              \right.
 \]
Then $(U_n(t,s)_{0\le s\le t\le T}$ is $C_0$-evolution family of invertible operators, and by \eqref{eq:dilationcomm} we have
\[ S_n(t,s) = Q  U_n(t,s)J, \qquad 0\leq s\leq t\leq T.\]
It follows that there exists an approximate invertible dilation given by
$Q_n(t) = Q U_n(t,0)$ and $J_n(s) = U_n(s,0)^{-1} J$.
\end{example}

The next theorem extends the Hausenblas--Seidler dilation theorem to evolution families on $2$-smooth Banach spaces.
By Example \ref{ex:extransf2} it is applicable to $C_0$-semigroups on $2$-smooth Banach spaces whose negative generator has a bounded
$H^\infty$-calculus of angle $<\frac12\pi$. The resulting maximal inequality, with $O(\sqrt{p})$ dependence of the constant  as $p\to \infty$, was obtained independently in \cite{Sei} and \cite{VerWei}. Some of the maximal estimates in the latter paper are valid for a class of processes strictly larger than $L_{\bF}^0(\Omega;L^2(0,T;\gamma(H,X)))$, but with best-known constant of order $O(p)$ instead of $O(\sqrt{p})$.

\begin{theorem}[Dilation]\label{thm:approxinvdilation}
Let $(S(t,s))_{0\leq s\leq t\leq T}$ be a $C_0$-evolution family on a $(2,D)$-smooth Banach space $X$ which admits an approximate invertible dilation on a sequence of $(2,D)$-smooth Banach spaces $(Y_n)_{n\geq 1}$.
For all $0< p<\infty$ and $g\in L_{\bF}^p(\Omega;L^2(0,T;\gamma(H,X)))$ the process $(\int_0^t S(t,s)g_s \ud W_s)_{t\ge 0}$ has a continuous version which satisfies
\[\E\sup_{t\in [0,T]}\Big\n \int_0^t S(t,s)g_s \ud W_s\Big\n^p
\leq C_{p,D}^p C_J^p C_Q^p\|g\|_{L^p(\Omega;L^2(0,T;\gamma(H,X)))}^p,\]
where $C_J = \sup_{n\geq 1}\sup_{t\in [0,T]}\|J_n(t)\|$ and  $C_Q = \sup_{n\geq 1}\sup_{t\in [0,T]}\|Q_n(t)\|$.
For $2\le p<\infty$ one may take $C_{p,D}  = 10D \sqrt{p}$.
\end{theorem}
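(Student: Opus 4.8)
The plan is to prove the theorem first for an \emph{exact} invertible dilation and then reduce the approximate case to it by a limiting argument. \textbf{Step 1 (exact invertible dilation).} Suppose first that $S(t,s) = Q(t)J(s)$ for all $0\le s\le t\le T$, with $J:[0,T]\to\calL(X,Y)$ and $Q:[0,T]\to\calL(Y,X)$ strongly continuous and $Y$ being $(2,D)$-smooth; write $C_J = \sup_t\|J(t)\|$ and $C_Q = \sup_t\|Q(t)\|$, which are finite by the uniform boundedness theorem. Since $J(\cdot)$ is strongly continuous and deterministic, the process $J(\cdot)g_\cdot$ is progressively measurable, and by the (left) ideal property of $\gamma$-radonifying operators it lies in $L^p_{\bF}(\Omega;L^2(0,T;\gamma(H,Y)))$ with norm at most $C_J\|g\|_{L^p(\Omega;L^2(0,T;\gamma(H,X)))}$. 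Applying Theorem \ref{thm:Seid} in the $(2,D)$-smooth space $Y$, the process $v_t := \int_0^t J(s)g_s\ud W_s$ has a continuous version with $\E\sup_t\|v_t\|^p\le C_{p,D}^pC_J^p\|g\|_{L^p(\Omega;L^2(0,T;\gamma(H,X)))}^p$, where $C_{p,D}=10D\sqrt p$ for $2\le p<\infty$. Since bounded operators commute with the stochastic integral, $\int_0^t S(t,s)g_s\ud W_s = Q(t)v_t$; joint continuity of $(t,y)\mapsto Q(t)y$ (strong continuity plus uniform boundedness of $Q$) makes $t\mapsto Q(t)v_t$ a continuous version of $u$, and $\sup_t\|u_t\|\le C_Q\sup_t\|v_t\|$ yields the estimate with constant $C_{p,D}C_JC_Q$.

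\textbf{Step 2 (the maximal bound in the approximate case).} With $(J_n,Q_n,Y_n)$ an approximate invertible dilation, $C_J = \sup_n\sup_t\|J_n(t)\|$, $C_Q = \sup_n\sup_t\|Q_n(t)\|$, and $S_n(t,s) := Q_n(t)J_n(s)$, the computation of Step 1 (carried out in $Y_n$) shows that $u^{(n)}_t := \int_0^t S_n(t,s)g_s\ud W_s = Q_n(t)\int_0^t J_n(s)g_s\ud W_s$ has a continuous version with $\E\sup_t\|u^{(n)}_t\|^p\le K^p := (C_{p,D}C_JC_Q)^p\|g\|_{L^p(\Omega;L^2(0,T;\gamma(H,X)))}^p$ \emph{uniformly in $n$}. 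I would then show $u^{(n)}_t\to u_t$ in $L^p(\Omega;X)$ for each fixed $t$: writing $u^{(n)}_t - u_t = \int_0^t (S_n(t,s)-S(t,s))g_s\ud W_s$, Theorem \ref{thm:Seid} bounds $\E\|u^{(n)}_t-u_t\|^p$ by $C_{p,D}^p\,\E\bigl(\int_0^t\|(S_n(t,s)-S(t,s))g_s\|_{\gamma(H,X)}^2\ud s\bigr)^{p/2}$, and for fixed $t$ and a.e.\ $(s,\om)$ the operators $S_n(t,s)=Q_n(t)J_n(s)$ converge strongly to $S(t,s)$ with uniform bound $C_JC_Q$; since strongly convergent uniformly bounded operators act continuously on $\gamma$-radonifying operators (approximate $g_s(\om)$ by finite rank operators), the integrand tends to $0$ pointwise and is dominated by $(C_JC_Q+M)^2\|g_s\|^2$ with $M=\sup_{s\le t}\|S(t,s)\|$, so dominated convergence concludes.

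\textbf{Step 3 (upgrading to a continuous version).} Fixing a countable dense $D\subseteq[0,T]$ with $0,T\in D$, the pointwise $L^p$-convergence of Step 2 gives $\E\max_{t\in F}\|u_t\|^p\le K^p$ for every finite $F\subseteq D$, hence $\E\sup_{t\in D}\|u_t\|^p\le K^p$ by monotone convergence. To produce a \emph{continuous} version: if $g\in L^0_{\bF}(\Omega;L^q(0,T;\gamma(H,X)))$ for some $q\in(2,\infty)$ --- in particular if $g$ is an adapted finite rank step process --- Theorem \ref{thm:fact} supplies one, and for it $\sup_{t\in D}\|u_t\|=\sup_{t\in[0,T]}\|u_t\|$ a.s., so $\E\sup_{[0,T]}\|u_t\|^p\le K^p$; for arbitrary $g\in L^p_{\bF}(\Omega;L^2(0,T;\gamma(H,X)))$, approximate by adapted finite rank step processes $g^{(k)}\to g$ in $L^p(\Omega;L^2(0,T;\gamma(H,X)))$ and apply the estimate just obtained to $g^{(k)}$ and to $g^{(k)}-g^{(l)}$: the associated continuous processes form a Cauchy sequence in $L^p(\Omega;C([0,T];X))$, whose limit $\tilde u$ is a continuous version of $u$ (because $\int_0^t S(t,s)g^{(k)}_s\ud W_s\to\int_0^t S(t,s)g_s\ud W_s$ in $L^p(\Omega;X)$ for each $t$ by Theorem \ref{thm:Seid} and $\sup\|S(t,s)\|<\infty$) satisfying $\E\sup_{[0,T]}\|\tilde u_t\|^p\le K^p$. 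This settles the case $2\le p<\infty$ with $C_{p,D}=10D\sqrt p$, and the range $0<p<2$ then follows from the case $p=2$ via Remark \ref{rem:extrapol}. The hard part will be Step 3: since $u^{(n)}$ is tied to the varying spaces $Y_n$ and is close to $u$ only after applying $Q_n$, the sequence $(u^{(n)})$ does \emph{not} converge in $L^p(\Omega;C([0,T];X))$, so one cannot simply pass to the limit --- the maximal bound must first be extracted on a countable dense set and only afterwards promoted to a genuine continuous version, using the factorisation method of Theorem \ref{thm:fact} together with a density argument.
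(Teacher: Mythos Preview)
Your proof is correct and follows essentially the same strategy as the paper's: reduce to $p\ge 2$ via Remark~\ref{rem:extrapol}, obtain a continuous version through the factorisation method (Theorem~\ref{thm:fact}) for sufficiently regular $g$, establish the maximal estimate first on finite subsets of $[0,T]$ by applying Theorem~\ref{thm:Seid} to $\int_0^t J_n(s)g_s\ud W_s$ in $Y_n$ and passing to the limit in $n$, then extend to general $g$ by density. The only differences are in execution: you isolate the exact-dilation case as a preliminary step and pass to the limit via $L^p$-convergence of $\max_{t\in F}\|u_t^{(n)}\|$ on finite time sets $F$, whereas the paper works directly with the approximate dilation and uses almost-sure subsequential convergence together with Fatou's lemma.
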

\begin{proof}
By Remark \ref{rem:extrapol} it suffices to consider the case $2\le p<\infty$, and by a limiting argument it even suffices to consider the case $2<p<\infty$.

Let us first assume that  $g\in L_{\bF}^p(\Omega;L^p(0,T;\gamma(H,X)))$.
For such processes, Theorem \ref{thm:fact}
implies the existence of a continuous version.
By monotone convergence it suffices to prove the maximal estimate with suprema taken over finite sets $\pi\subseteq [0,T]$. For $t\in \pi$, write
\begin{align*}
u_t & = \int_0^t S(t,s) g_s  \ud W_s = \limn \int_0^t Q_n(t) J_n(s) g_s  \ud W_s = \limn Q_n(t) \int_0^t  J_n(s) g_s  \ud W_s,
\end{align*}
where the limit is taken in  $L^p(\Omega;X)$ by dominated convergence. Using that $\pi$ is finite, by taking suitable subsequences we may assume the above limit holds pointwise on $\pi\times \Omega_0$, where $\Omega_0\subseteq\Om$ is a measurable set with $\P(\Omega_0) = 1$.
Therefore, for all $t\in \pi$, pointwise on $\Omega_0$ we have
\begin{equation*}
\|u_t\|_{X} \leq C_Q \liminf_{n\to \infty} \Big\|\int_0^t J_n(s) g_s  \ud W_s\Big\|_{Y_n}.
\end{equation*}
Taking the supremum over $t\in \pi$, upon taking $L^p(\Omega)$-norms we obtain
\begin{align*}
\E\sup_{t\in \pi}\|u_t\|^p &\leq C_Q^p \E \sup_{t\in \pi} \liminf_{n\to \infty} \Big\|\int_0^t J_n(s) g_s  \ud W_s\Big\|_{Y_n}^p
\\ &\leq C_Q^p \E\liminf_{n\to \infty} \sup_{t\in \pi} \Big\|\int_0^t J_n(s) g_s  \ud W_s\Big\|_{Y_n}^p
\\ &\leq C_Q^p \liminf_{n\to \infty} \E \sup_{t\in \pi} \Big\|\int_0^t J_n(s) g_s  \ud W_s\Big\|_{Y_n}^p & \text{by Fatou's Lemma}
\\ &\leq (10D  \sqrt{p} C_Q)^p \liminf_{n\to \infty} \|J_n g\|_{L^p(\Omega;L^2(0,T;\gamma(H,Y_n)))}^p & \text{by Theorem \ref{thm:Seid}}
\\ &\leq (10D \sqrt{p} C_J  C_Q)^p\|g\|_{L^p(\Omega;L^2(0,T;\gamma(H,X)))}.
\end{align*}
This gives the result for processes $g\in L_{\bF}^p(\Omega;L^p(0,T;\gamma(H,X)))$.
The general case of processes $g\in L_{\bF}^p(\Omega;L^2(0,T;\gamma(H,X)))$ follows from it by approximation.
\end{proof}

Remarkably, the method of dilations has been
used \cite{PesZab, SchnVer17} to derive maximal inequalities also for the case of stochastic Volterra equations on Hilbert spaces.

\section{The contractive case}\label{sec:maxineq-contr}

Up to this point we have considered general $C_0$-evolution families. In the present section we take a closer look at
the special case of $C_0$-evolution families of contractions. By a standard rescaling argument, the results of this section extend to the situation where, for some $\lambda\geq 0$, one has
$$\|S(t,s)\| \le e^{\la(t-s)}, \qquad 0\le s\le t\le T.$$
An additional term $e^{\lambda T}$ has then to be added on the right-hand side of the estimates.

\subsection{The main result}

We begin with a general result on the existence of continuous versions. It extends a result stated in \cite{Kot82} for Hilbert spaces
and continuous square integrable martingales, to $(2,D)$-smooth Banach spaces and Brownian motion. Replacing Hilbertian $L^2$-estimates by \cite[Lemma 2.2]{NV20} and Proposition \ref{prop:Neid}, the original argument can be generalised and leads to the following result with $D^4$ instead of $D^2$; an additional approximation argument permits the passage to $D^2$.

\begin{proposition}\label{prop:contractionS}
Let $(S(t,s))_{0\le s\le t\le T}$ be a $C_0$-evolution family of contractions on a $(2,D)$-smooth Banach space $X$. For all $g\in L_{\bF}^0(\Omega;L^2(0,T;\gamma(H,X)))$
the process $(\int_0^t S(t,s)g_s \ud W_s)_{t\in[0,T]}$ has a continuous version.
If $g\in L_{\bF}^2(\Omega;L^2(0,T;\gamma(H,X)))$, then it satisfies the following tail estimate for all $r>0$:
\[\P\Bigl(\sup_{t\in [0,T]}\Big\n \int_0^t S(t,s)g_s \ud W_s\Big\n\geq r\Bigr) \leq \frac{D^2}{r^2} \|g\|_{L^2(\Om;L^2(0,T;\gamma(H,X)))}^2.\]
\end{proposition}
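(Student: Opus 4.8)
The plan is to reduce the continuous-version statement to the tail estimate via the standard localisation device of Remark \ref{rem:extrapol} (together with the fact, recorded there, that continuity for arbitrary $g\in L_{\bF}^0$ follows once one has any $L^p$-maximal or tail bound on a dense class), so the real work is the weak-type $(2,2)$ inequality for $g\in L_{\bF}^2(\Omega;L^2(0,T;\gamma(H,X)))$. First I would reduce to the case where $g$ is an adapted finite rank step process, subordinate to a partition $0=t_0<t_1<\dots<t_N=T$; the general case follows by the density of such processes in $L_{\bF}^2$ and Proposition \ref{prop:Neid}, passing to a subsequence converging a.s.\ uniformly on $[0,T]$ along the partition points. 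On such a partition, set $u_j := \int_0^{t_j} S(t_j,s) g_s\ud W_s$ and observe the key algebraic identity coming from the evolution property: for $j\ge k$,
\[
u_j = S(t_j,t_k) u_k + \int_{t_k}^{t_j} S(t_j,s) g_s \ud W_s,
\]
so that $\|u_j\| \le \|u_k\| + \|\int_{t_k}^{t_j} S(t_j,s)g_s\ud W_s\|$ by contractivity of $S(t_j,t_k)$. This is what replaces the martingale property: the process $(u_j)$ is not a martingale, but it is a ``quasi-martingale'' controlled by increments of the stochastic integral, and the contractivity of the evolution family is exactly what makes the triangle-inequality step work.

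The heart of the argument is a good-$\lambda$ / stopping-time estimate à la Kotelenez, run on the discrete skeleton $(u_j)_{j=0}^N$ and then refined to the continuum by the continuity of $t\mapsto \int_0^t S(t,s)g_s\ud W_s$ on each $[t_{k-1},t_k]$ (where the integrand's ``outer'' operator $S(t,s)$ varies continuously). Fix $r>0$ and let $\tau$ be the first index $j$ with $\|u_j\|\ge r$; on $\{\tau\le N\}$ one estimates $r^2 \le \|u_\tau\|^2$ and then uses the decomposition above starting from $k=0$, i.e.\ $\|u_\tau\|\le \|\int_0^{t_\tau} S(t_\tau,s) g_s\ud W_s\|$ trivially, but the useful bound is obtained by conditioning: writing $M^{(j)}_i := \int_0^{t_i} S(t_j,s) g_s\ud W_s$ for fixed $j$, the process $i\mapsto M^{(j)}_i$ \emph{is} an $X$-valued martingale (the outer operator $S(t_j,\cdot)$ is now frozen), and $u_j = M^{(j)}_j$. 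Apply the $2$-smooth weak-type bound for martingales — this is precisely \cite[Lemma 2.2]{NV20} combined with Proposition \ref{prop:Neid} — to control $\P(\sup_i \|M^{(j)}_i\|\ge r)$ by $D^2 r^{-2}\E\sum_{i\le j}\|S(t_j,t_{i-1})\|^2\|g\|_{\dots}^2 \le D^2 r^{-2}\|g\|_{L^2(\Omega;L^2(0,T;\gamma(H,X)))}^2$, using contractivity to drop the operator norms. The subtlety the authors flag is that the naive route through this argument produces $D^4$ (one factor $D^2$ from the smoothness estimate applied to the ``inner'' integral and another from re-summing over the partition with a second application), and that an extra approximation step — presumably letting the mesh of the partition tend to zero and using that the limiting object is a genuine martingale in a suitable sense, or an optimisation of the constants in the good-$\lambda$ inequality — recovers the sharp $D^2$.

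The main obstacle I anticipate is exactly this constant-tracking: organising the stopping-time argument so that contractivity is used to eliminate \emph{all} operator-norm factors and so that the weak-type martingale inequality of \cite{NV20} is invoked only once on the right (frozen-operator) martingale, rather than telescoping through the partition in a way that squares the constant. A secondary technical point is justifying the passage from the discrete maximal bound over partition points to the genuine pathwise supremum over $[0,T]$: this requires knowing a priori that $t\mapsto \int_0^t S(t,s)g_s\ud W_s$ has a continuous version, which for finite rank step processes is immediate (it is a finite sum of continuous-in-$t$ operator images of Brownian increments), so one establishes the tail bound first for step processes with honest continuous paths, refines the partition, and only then transfers to general $g$ by the density/subsequence argument and Remark \ref{rem:extrapol}.
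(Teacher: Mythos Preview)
Your overall plan matches the paper's: discretise, run a Kotelenez-type stopping-time argument on the skeleton $(u_j)$, replace the Hilbertian Pythagorean identity by the $2$-smooth estimate of \cite[Lemma 2.2]{NV20} together with Proposition~\ref{prop:Neid}, accept $D^4$, then sharpen to $D^2$ by a further approximation. The reductions at the two ends (step processes, partition-to-continuum, density) are also correct.

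The gap is in your description of the core mechanism. The frozen-martingale observation --- that $i\mapsto M^{(j)}_i:=\int_0^{t_i}S(t_j,s)g_s\ud W_s$ is a martingale with $u_j=M^{(j)}_j$ --- is correct but does not do the work: bounding $\P(\sup_i\|M^{(j)}_i\|\ge r)$ for each fixed $j$ does not control $\P(\sup_j\|u_j\|\ge r)$, since the supremum you want runs over the diagonal $j\mapsto M^{(j)}_j$ where the martingale itself changes with $j$. Summing the individual bounds over $j$ costs a factor $N$, and contractivity points the wrong way to dominate $\|u_j\|$ by $\|M^{(N)}_j\|=\|S(t_N,t_j)u_j\|\le\|u_j\|$. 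What actually drives Kotelenez's argument is a \emph{one-step} conditional inequality obtained by applying the $2$-smooth lemma directly to the decomposition $u_j=S(t_j,t_{j-1})u_{j-1}+\Delta_j$ with $\Delta_j=\int_{t_{j-1}}^{t_j}S(t_j,s)g_s\ud W_s$:
\[
\E\bigl[\|u_j\|^2\,\big|\,\F_{t_{j-1}}\bigr]\ \le\ \|u_{j-1}\|^2+D^2\,\E\bigl[\|\Delta_j\|^2\,\big|\,\F_{t_{j-1}}\bigr],
\]
using contractivity on the first summand. Bounding $\E\|\Delta_j\|^2$ via Proposition~\ref{prop:Neid} shows that $\|u_j\|^2-D^4\int_0^{t_j}\|g_s\|_{\gamma(H,X)}^2\ud s$ is a supermartingale, and \emph{now} your stopping-time/Chebyshev step goes through to yield the weak-type bound with $D^4$. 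The passage to $D^2$ comes from refining the mesh so that each $\Delta_j$ is essentially a single Gaussian increment whose second moment is exactly $(t_j-t_{j-1})\|g_{t_{j-1}}\|_{\gamma(H,X)}^2$, eliminating the Neidhardt factor.
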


By combining the discretisation technique used in the proof of this proposition
with a version of a theorem of Pinelis \cite{Pin} used in the proof of Theorem \ref{thm:Seid}, the following
$L^p$-maximal inequality has been recently obtained in \cite{NV20}.

\begin{theorem}\label{thm:contractionS-new}
Let $(S(t,s))_{0\le s\le t\le T}$ be a $C_0$-evolution family of contractions on a $(2,D)$-smooth Banach space $X$.
For all $g\in L_{\bF}^p(\Omega;L^2(0,T;\gamma(H,X)))$ with $0<p<\infty$
the process $(\int_0^t S(t,s)g_s \ud W_s)_{t\in [0,T]}$ has a continuous version which satisfies
\[\E\sup_{t\in [0,T]}\Big\n \int_0^t S(t,s)g_s \ud W_s\Big\n^p\leq C_{p,D}^p \|g\|_{L^p(\Omega;L^2(0,T;\gamma(H,X)))}^p.\]
For $2\le p<\infty$ one may take $C_{p,D} =  10D \sqrt{p}$.
\end{theorem}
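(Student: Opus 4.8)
The plan is to follow the route indicated after the statement: discretise the convolution in time, recognise the resulting discrete process as a martingale ``perturbed'' by the contractions $S(t_k,t_{k-1})$, and run it through the $2$-smooth extension of the Burkholder--Rosenthal inequality due to Pinelis \cite{Pin}, with the constants tracked as in \cite{NV20}. By Remark \ref{rem:extrapol} it suffices to treat $2\le p<\infty$, and for arbitrary $g\in L_{\bF}^0(\Omega;L^2(0,T;\gamma(H,X)))$ the existence of a continuous version is already furnished by Proposition \ref{prop:contractionS}; so the real task is the $L^p$-bound. The adapted finite rank step processes are dense in $L_{\bF}^p(\Omega;L^2(0,T;\gamma(H,X)))$ and, being bounded, belong to $L_{\bF}^p(\Omega;L^q(0,T;\gamma(H,X)))$ for every $q$; choosing $q\in(2,\infty)$ with $q\ge p$, Theorem \ref{thm:fact} provides a continuous version of $u$ with $\E\sup_{t\in[0,T]}\n u_t\n^p<\infty$. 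I would therefore first prove the estimate for step processes $g$ — where pathwise continuity of $u$ lets one replace $\sup_{t\in[0,T]}$ by suprema over a refining sequence of finite partitions — and then pass to general $g$ by approximation, the estimate itself producing the continuous version as a limit in $L^p(\Omega;C([0,T];X))$.

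For the core estimate, fix a partition $0=t_0<t_1<\cdots<t_n=T$, put $u_k:=u_{t_k}$ and $d_k:=\int_{t_{k-1}}^{t_k}S(t_k,s)g_s\ud W_s$, and write $\E_{k-1}:=\E(\,\cdot\mid\F_{t_{k-1}})$. The evolution law $S(t_k,s)=S(t_k,t_{k-1})S(t_{k-1},s)$ and linearity of the stochastic integral give the recursion $u_k=S(t_k,t_{k-1})u_{k-1}+d_k$ for $k=1,\dots,n$, with $u_0=0$. Two conditional estimates drive the argument. First, $\E_{k-1}d_k=0$ and, by the conditional form of the $(2,D)$-smoothness inequality (the content of \cite[Lemma 2.2]{NV20}, which also underlies Proposition \ref{prop:Neid}) together with the contractivity $\n S(t_k,t_{k-1})u_{k-1}\n\le\n u_{k-1}\n$,
\[\E_{k-1}\n S(t_k,t_{k-1})u_{k-1}+d_k\n^2\le\n u_{k-1}\n^2+D^2\,\E_{k-1}\n d_k\n^2.\]
Second, Proposition \ref{prop:Neid} applied conditionally, the contractivity of $S$, and the ideal estimate $\n S(t_k,s)g_s\n_{\gamma(H,X)}\le\n g_s\n_{\gamma(H,X)}$ for $\gamma$-radonifying operators \cite[Chapter 9]{HNVW17} give
\[\E_{k-1}\n d_k\n^2\le D^2\,\E_{k-1}\int_{t_{k-1}}^{t_k}\n g_s\n_{\gamma(H,X)}^2\ud s,\qquad\text{hence}\qquad\sum_{k=1}^n\E_{k-1}\n d_k\n^2\le D^2\n g\n_{L^2(0,T;\gamma(H,X))}^2.\]

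The recursion is exactly of the form to which Pinelis' $2$-smooth Burkholder--Rosenthal inequality applies; the only departure from a genuine martingale is the contraction in front of $u_{k-1}$, and since the inductive estimates of \cite{Pin} use the norm of the ``previous'' term only through monotone quantities, replacing $u_{k-1}$ by $S(t_k,t_{k-1})u_{k-1}$ — which has smaller norm — can only improve them. Thus the argument carries over, and tracking constants as in \cite{NV20} it yields a bound of the form
\[\E\max_{0\le k\le n}\n u_k\n^p\le(10D\sqrt p)^p\,\E\Big(\sum_{k=1}^n\E_{k-1}\n d_k\n^2\Big)^{p/2}+K_{p,D}^p\,\E\max_{1\le k\le n}\n d_k\n^p\]
with $K_{p,D}$ depending only on $p$ and $D$. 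Inserting the square-function bound makes the first term at most $(10D\sqrt p)^p\n g\n_{L^p(\Omega;L^2(0,T;\gamma(H,X)))}^p$; and along a refining sequence of partitions the remainder tends to $0$, since $\max_{1\le k\le n}\n d_k\n\le 2\sup_{t\in[0,T]}\n u_t\n\in L^p(\Omega)$ and $\max_{1\le k\le n}\n d_k\n\to0$ almost surely, by strong continuity of $S$ together with the (uniform) pathwise continuity of $u$, so that dominated convergence applies. By monotone convergence the left-hand side increases to $\E\sup_{t\in[0,T]}\n u_t\n^p$. This establishes the estimate for step processes $g$; approximation then extends it — and simultaneously produces the continuous version — to all $g\in L_{\bF}^p(\Omega;L^2(0,T;\gamma(H,X)))$, and Remark \ref{rem:extrapol} disposes of the range $0<p<2$ and of $L_{\bF}^0$.

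The main obstacle is the adaptation of Pinelis' argument with the contractions inserted while keeping the numerical constant at $10D$: one must re-run the inductive construction of \cite{Pin}, verifying at each stage that the norm of the ``previous'' term enters monotonically, so that contracting it is harmless; this is the genuinely technical part (carried out in detail in \cite{NV20}). A secondary but nontrivial point is the limiting step — showing that $\E\max_k\n d_k\n^p\to0$ and that the discrete maxima increase to the continuous-time supremum — which is exactly where the a priori continuity of $u$ supplied by the factorisation method (Theorem \ref{thm:fact}) is indispensable: without it the discretisation would control $u$ only at the partition points.
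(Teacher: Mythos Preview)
Your proposal is correct and follows precisely the route the paper indicates—the discretisation technique behind Proposition~\ref{prop:contractionS} combined with Pinelis' inequality, as carried out in detail in \cite{NV20}. One slip in the constant tracking: your Pinelis-type bound already carries a factor $D$, and your square-function estimate $\sum_k\E_{k-1}\n d_k\n^2\le D^2\n g\n_{L^2(0,T;\gamma(H,X))}^2$ (from the conditional form of Proposition~\ref{prop:Neid}) contributes another, so as written you obtain $10D^2\sqrt p$ rather than the stated $10D\sqrt p$; the ``additional approximation argument'' alluded to before Proposition~\ref{prop:contractionS}—freezing $S(t_k,\cdot)$ piecewise in $s$ so that for step processes the increments become conditionally Gaussian and their conditional second moments are \emph{exact} $\gamma$-norms (no $D$)—is what eliminates the extra factor.
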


Theorem \ref{thm:contractionS-new} is in some sense definitive, in that it applies to arbitrary $C_0$-evolution families of contractions and gives the correct order $O(\sqrt{p})$ of the constant; as such it is new even for Hilbert spaces $X$. It is also new for $C_0$-semigroups of contractions in $2$-smooth Banach spaces.

Theorem \ref{thm:contractionS-new} has a long history with contributions by many authors.  Here we will only review the semigroup approach; $L^2$-maximal inequalities for monotone stochastic evolution equations with random coefficients in Hilbert spaces are older and go back to \cite{Par} and \cite{KryRoz}. For an exposition and further references to the literature the reader is referred to \cite{LiuRoc}.
The first author to use semigroup methods to derive $L^2$-maximal inequalities is Kotelenez \cite{Kot82} who obtained path continuity in the more general situation where term $g \ud W$ is replaced by an arbitrary continuous square integrable $X$-valued martingale. This paper also contains a weak type estimate similar to the one of Proposition \ref{prop:contractionS}. Still for Hilbert spaces and $p=2$, \eqref{eq:Lpmaxest} was first proved in \cite{Ich82, Kot84} using It\^o's formula applied to the $C^2$-function $x\mapsto \n x\n^2$ under further assumptions on the evolution family.
For $C_0$-contraction semigroups, these results were extended to exponents $2\le p<\infty$ by Tubaro \cite{Tub}, who applied It\^o's formula to the mapping $x\mapsto \n  x\n ^p$ which for Hilbert is twice continuously Fr\'echet differentiable. The extension to exponents $0<p<2$ was subsequently obtained by Ichikawa \cite{Ichi}.
Tubaro's method of proof was revisited by Brze\'zniak and Peszat \cite{BrzPes}, who extended it
to Banach spaces $X$ with the property that for some $2\le p<\infty$ the mapping $x\mapsto \n x\n ^p$ is twice continuously Fr\'echet differentiable and the first and second Fr\'echet derivatives are bounded by constant multiples of $\n x\n^{p-1}$ and $\n x\n^{p-2}$, respectively. Spaces with this property are $2$-smooth and include  $L^q(\mu)$ for $2\le q\le p<\infty$ and the Besov spaces $B^{s}_{q,r}(\R^d)$ for $2\leq q\leq r\leq p<\infty$. In the converse direction it is
not true that all $2$-smooth spaces satisfy the twice differentiability condition; an abstract conunterexample follows from \cite[Theorem 3.9]{LeoSun} (see also \cite[Example 1.1]{NeeZhu}). In the Besov scale the twice differentiability condition is unclear if $2\le r<q<\infty$, even though the space $B^{s}_{q,r}(\R^d)$ is $2$-smooth in that case, too.
The approach based on It\^o's formula extends to evolution families, but has the general drawback that it does not seem to give the optimal growth rate $O(\sqrt{p})$ of the constant as expected from the Burkholder--Davis--Gundy inequalities as $p\to \infty$.
As discussed in Subsection \ref{sec:maxstochasticconv}\ref{subsec:dilations}, for $C_0$-contraction semigroups on Hilbert spaces a new proof of the maximal inequality for exponents $0<p<\infty$ giving growth of order $O(\sqrt{p})$ was obtained by Hausenblas and Seidler \cite{HauSei1}.

The approach via It\^o's formula was once more revisited in \cite{NeeZhu}, where it was finally extended to arbitrary $2$-smooth Banach spaces by exploiting the fact that, in such spaces, for $2\le p<\infty$ the mapping $x\mapsto \n  x\n ^p$ is once continuously Fr\'echet differentiable with a Lipschitz continuous derivative. As it turns out, this already suffices to prove a version of the It\^o formula with the help of which the argument can be completed. This approach, however, does not seem to give the optimal $p$-dependence of the constant as $p\to\infty$.

\subsection{The It\^o formula approach revisited}\label{subsec:Ito}

The aim of the present subsection is to present the It\^o formula approach to maximal estimates for stochastic convolutions.
In comparison with Theorem \ref{thm:contractionS-new} it does not lead to new results (in fact we need stronger assumptions on the evolution family and obtain non-optimal asymptotic dependence of the constant), but this approach has the merit that it can be extended to {\em random} $C_0$-evolution families of contractions. To the best of our knowledge, for this setting no maximal $L^p$-estimates of the form \eqref{eq:Lpmaxest} in $2$-smooth spaces are available in the literature.
For stochastic evolution equations with random coefficients in Hilbert spaces subject to monotonicity conditions, $L^2$-maximal inequalities go back to \cite{Par} and \cite{KryRoz}; for an exposition and further reference see \cite{LiuRoc}. Some extensions to the case $p\not=2$ have been obtained recently in \cite{NeeSis}.

In order to avoid technicalities that would obscure the line of argument we present our main results for non-random evolution families and indicate the changes that have to be made in the $\Omega$-dependent case in Remark \ref{rem:Aadapted}.
Rather than discussing the maximal $L^p$-inequality in \cite{NeeZhu} in detail, we will provide a detailed proof of a Gaussian tail estimate. The rationale of this choice is that this estimate cannot be deduced (e.g., via Lemma \ref{lem:expesttail}) from the result of \cite{NeeZhu} due to the fact it doesn't provide the correct order $O(\sqrt{p})$ of the constant. The result presented here is new, in that it generalises \cite[Theorem 1.2]{BrzPes} to arbitrary $2$-smooth Banach spaces. A further novel feature of our result is that it gives an improved bound on the variance.

As in \cite{BrzPes, ZhuBrzLiu} the idea is to apply It\^o's formula to $h_\la:X\to [0,\infty)$ given by
 $$ h_\la(x) := (1+\la \n x\n^2)^{1/2}, \qquad x\in X.$$
The function $h_\la$ is Fr\'echet differentiable and
\[h_\la'(x) = \frac{\la q'(x)}{(1+\la \n x\n^2)^{1/2}},\]
where $q(x) := \|x\|^2$ is known to be Fr\'echet differentiable (see \cite{NeeZhu}) with
$q'(0)=0$ and
\begin{align}\label{eq:estqaccentx}
q'(x) = 2\n x\n n_x, \qquad x\not=0,
\end{align}
where $n_x$ is the Fr\'echet derivative of $\n \cdot\n$ at $x\not=0$ and satisfies $\n n_x\n =1$.
Although $h_{\lambda}$ is generally not $C^2$, the following `It\^o inequality' holds:

\begin{theorem}\label{thm-Ito} Let $X$ be a $2$-smooth Banach space and let $(a_t)_{t\in [0,T]}$ and $(g_t)_{t\in [0,T]}$ be processes in $L_{\bF}^0(\Om;L^1(0,T;X))$ and $ L_{\bF}^0(\Om;L^2(0,T;\gamma(H,X)))$, respectively. Fix $x\in X$ and let the process $(\xi_t)_{t\in [0,T]}$ be given by
    \begin{align*}
   \xi_t:=x+\int_0^t a_s\ud  s+\int_0^t g_s   \ud  W_s.
\end{align*}
Then, almost surely, for all $t\in [0,T]$ we have
 \begin{align}\label{eq:Itohla}
h_\la(\xi_t)\leq h_\la(x)+\int_0^t \lb a_s,h_\la'(\xi_s)\rb\ud  s+\int_0^t h_\la'(\xi_s)\circ g_s \ud  W_s + \frac12 D^2\la \|g\|_{L^2(0,t;\gamma(H,X))}^2.
\end{align}
\end{theorem}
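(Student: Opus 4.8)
The plan is to prove \eqref{eq:Itohla} by a standard mollification-and-limit argument, exploiting the fact that $h_\la$ is only once continuously Fr\'echet differentiable with Lipschitz derivative rather than $C^2$. First I would record the regularity of $h_\la$: since $q(x)=\n x\n^2$ is $C^1$ with derivative satisfying \eqref{eq:estqaccentx} and $\n q'(x)\n = 2\n x\n$, the chain rule gives the stated formula for $h_\la'$, and one checks directly that $h_\la'$ is bounded (by $\sqrt{\la}$) and Lipschitz continuous on $X$, with a Lipschitz constant that can be controlled in terms of $\la$ and $D$; the $(2,D)$-smoothness enters precisely here, because in a $2$-smooth space the norm squared has a derivative whose "second-order behaviour" is governed by the constant $D$ via the inequality \eqref{eq:K}. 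The key quantitative input will be a second-order estimate of the form
\[
h_\la(x+y) \leq h_\la(x) + \lb y, h_\la'(x)\rb + \tfrac12 D^2 \la \n y\n^2, \qquad x,y\in X,
\]
which should follow from \eqref{eq:K} by a direct computation (bound $h_\la(x+y)+h_\la(x-y)$ using concavity of $\sqrt{\cdot}$ together with the parallelogram-type inequality for $\n x\pm y\n^2$, then use convexity/the gradient inequality to peel off the linear term). This is the Banach-space replacement for "$\tfrac12\langle h_\la''(x) y, y\rangle \le \tfrac12 D^2\la\n y\n^2$".

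With this in hand, I would discretise: fix a partition $0 = t_0 < t_1 < \dots < t_N = t$ of $[0,t]$ and telescope
\[
h_\la(\xi_t) - h_\la(x) = \sum_{k=1}^N \big(h_\la(\xi_{t_k}) - h_\la(\xi_{t_{k-1}})\big).
\]
For each increment write $\xi_{t_k} - \xi_{t_{k-1}} = \int_{t_{k-1}}^{t_k} a_s\ud s + \int_{t_{k-1}}^{t_k} g_s\ud W_s =: \alpha_k + \beta_k$ and apply the second-order estimate above with $x = \xi_{t_{k-1}}$, $y = \alpha_k+\beta_k$. This produces a sum of linear terms $\sum_k \lb \alpha_k + \beta_k, h_\la'(\xi_{t_{k-1}})\rb$ plus a quadratic error $\tfrac12 D^2\la\sum_k \n\alpha_k+\beta_k\n^2$. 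The drift part of the linear sum converges to $\int_0^t \lb a_s, h_\la'(\xi_s)\rb\ud s$ by dominated convergence (using boundedness and continuity of $h_\la'$ along the paths, together with $a\in L^1(0,T;X)$); the stochastic part $\sum_k \lb \beta_k, h_\la'(\xi_{t_{k-1}})\rb$ is a sum of "forward" martingale increments which converges in probability to $\int_0^t h_\la'(\xi_s)\circ g_s\ud W_s$ — this is the usual It\^o-sum convergence, justified by adaptedness of $h_\la'(\xi_{t_{k-1}})$ and an $L^2$ (or truncated $L^2$) estimate using Proposition~\ref{prop:Neid}. Finally the quadratic error: $\n\alpha_k\n^2$ and the cross term $\n\alpha_k\n\,\n\beta_k\n$ vanish in the limit (the drift contributes $O(\max|t_k - t_{k-1}|)$), while $\sum_k \n\beta_k\n^2 = \sum_k \n\int_{t_{k-1}}^{t_k} g_s\ud W_s\n^2$ converges (in probability, along a suitable sequence of partitions) to the quadratic variation $\|g\|_{L^2(0,t;\gamma(H,X))}^2$ — but here, crucially, only an \emph{upper} bound is needed, which is exactly what Proposition~\ref{prop:Neid} (applied on each subinterval and summed) delivers, since $\E\n\beta_k\n^2 \le D^2 \E\|g\|_{L^2(t_{k-1},t_k;\gamma(H,X))}^2$. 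Passing to the limit and collecting terms yields \eqref{eq:Itohla} almost surely for fixed $t$; a continuity/localisation argument upgrades this to "almost surely for all $t\in[0,T]$ simultaneously", as usual.

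The main obstacle I anticipate is the rigorous treatment of the quadratic error term $\sum_k \n\beta_k\n^2$. Unlike the Hilbert-space case, there is no exact It\^o isometry, so one cannot simply identify this sum with a deterministic quadratic variation; instead one must argue that, along a refining sequence of partitions, $\E\big|\sum_k \n\beta_k\n^2 - \|g\|_{L^2(0,t;\gamma(H,X))}^2\big|$ — or rather, the relevant one-sided quantity — is controlled, which requires combining Proposition~\ref{prop:Neid} with a localisation to reduce to the case $g\in L^2_{\bF}(\Om;L^2(0,T;\gamma(H,X)))$ and then handling the general $L^0$ case by stopping times. A secondary technical point is ensuring that the "error" coming from replacing $h_\la(x+y)-h_\la(x)-\lb y,h_\la'(x)\rb$ by its bound $\tfrac12 D^2\la\n y\n^2$ is applied consistently: one gets an \emph{inequality} at each step, all pointing the same way, so the telescoped inequality survives the limit — this is why the result is an It\^o \emph{inequality} rather than an identity, and it is the whole reason the $2$-smooth theory works with only one-sided control.
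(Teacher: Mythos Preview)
Your discrete-telescoping approach is genuinely different from the paper's, which instead reduces to the case where $a$ and $g$ take values in a fixed finite-dimensional subspace $Y\subseteq X$, regularises the squared norm on $Y$ via Pinelis's Gaussian mollification to obtain a $C^\infty$ function $q_\varepsilon$ with $q_\varepsilon''(x)(v,v)\le 2D^2\n v\n^2$, applies the \emph{classical} It\^o formula to the smooth $h_{\lambda,\varepsilon}=(1+\lambda q_\varepsilon)^{1/2}$, bounds the second-order term by $\frac12 D^2\lambda\n g\n_{L^2(0,t;\gamma(H,X))}^2$, and lets $\varepsilon\downarrow 0$; the general case then follows by approximation. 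Your route via a pointwise second-order inequality for $h_\lambda$ is a legitimate alternative, though note that your direct argument using \eqref{eq:K}, concavity of $\sqrt{\,\cdot\,}$, and convexity of $h_\lambda$ seems to produce the remainder $D^2\lambda\n y\n^2$ rather than $\frac12 D^2\lambda\n y\n^2$; the sharp constant apparently requires the Pinelis regularisation in any case.

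The real gap is in your handling of the quadratic error term. You need $\limsup_n\sum_k\n\beta_k\n^2\le \n g\n_{L^2(0,t;\gamma(H,X))}^2$ \emph{pathwise} (or in probability) in order to pass the telescoped inequality to the limit, but Proposition~\ref{prop:Neid} delivers only the \emph{expectation} bound $\E\sum_k\n\beta_k\n^2\le D^2\E\n g\n_{L^2(0,t;\gamma(H,X))}^2$, with an unwanted extra factor $D^2$ on the right, and there is no general mechanism for promoting such an expectation bound to the pointwise inequality you are after (Fatou goes the wrong way, and localisation by stopping times does not convert $L^1$-control into almost-sure control). The correct argument is different: for adapted finite-rank step processes $g$ one can show directly that $\sum_k\n\beta_k\n^2\to\n g\n_{L^2(0,t;\gamma(H,X))}^2$ in probability, because on each step interval the increments are, conditionally, independent Gaussians in a fixed finite-dimensional subspace with second moment $\Delta t_k\n g\n_\gamma^2$ and fourth moment $O((\Delta t_k)^2)$, so a law-of-large-numbers computation applies. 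Once \eqref{eq:Itohla} is established for such $g$ (and simple $a$), the general case follows by the same approximation as in the paper's final step. So your strategy can be made to work, but not via the bound you cite.
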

\begin{proof}
We proceed in three steps.

\smallskip
{\em Step 1}. \ First suppose that $a$ and the operators in the range of $g$ take values in a fixed finite-dimensional subspace $Y$ of $X$. Then $\xi$ also takes its values in $Y$. Now we regularise the norm as in \cite[Lemma 2.2]{Pin}.  Let $\mu$ be a centred Gaussian measure with support $\text{supp}(\mu) = Y$. Fix $\varepsilon>0$ and let $q_{\varepsilon}:Y\to \R$ be given by
$$q_{\varepsilon}(x):= \int_Y \|x-\varepsilon y\|^2 \ud \mu(y).$$
Then by \cite[Lemma 2.2]{Pin} the function $q_{\varepsilon}$ has Fr\'echet derivatives of all orders, and
\begin{align}\label{eq:Pinelisestqvareps}
\big|q_{\varepsilon}(x)^{1/2} - \|x\|\big|\leq \varepsilon, \ \  \|(q_{\varepsilon}^{1/2})'(x)\| \leq 1, \ \ q_{\varepsilon}''(x)(v,v)\leq 2D^2\|v\|^2.
\end{align}
Moreover, $q_{\varepsilon}'(x) = \int_{Y} q'(x-\varepsilon y) \ud \mu(y)$ and from \eqref{eq:estqaccentx} and the dominated convergence theorem we obtain $q_{\varepsilon}'(x)\to q'(x)$ as $\varepsilon\downarrow 0$. Writing $q_\eps = q_\eps^{1/2}q_\eps^{1/2}$, differentiation by the product rule gives
\begin{align}\label{eq:qepsilonest}
\|q_{\varepsilon}'(x)\| = 2\|(q_{\varepsilon}^{1/2})'(x)\| q_{\varepsilon}^{1/2}(x)\leq 2q_{\varepsilon}^{1/2}(x).
\end{align}
It follows that the function $h_{\lambda,\varepsilon}:Y\to \R$ given by $$h_{\lambda,\varepsilon}(x) := (1+\lambda q_{\varepsilon}(x))^{1/2}, \qquad x\in Y,$$
has Fr\'echet derivatives of all orders, and
\[h_{\la,\varepsilon}'(x) = \frac{\lambda q_{\varepsilon}'(x)}{2(1+\lambda
q_{\varepsilon}(x))^{1/2}}, \ \  \ \ h_{\la,\varepsilon}''(x)(y,y) = \frac{\lambda q_{\varepsilon}''(x)(y,y)}{2(1+\lambda q_{\varepsilon}(x))^{1/2}} - \frac{\lambda^2 \lb y, q_{\varepsilon}'(x)\rb^2}{4(1+\lambda q_{\varepsilon}(x))^{3/2}}.\]
Therefore, by \eqref{eq:Pinelisestqvareps} and \eqref{eq:qepsilonest} for all $x\in Y$ one has $h_{\lambda,\varepsilon}(x)\to h_{\lambda}(x)$, $h_{\lambda,\varepsilon}'(x)\to h_{\lambda}'(x)$ as $\varepsilon\downarrow 0$, and
\begin{align}\label{eq:hlambdaepsilonest}
\|h_{\lambda,\varepsilon}'(x)\|\leq \sqrt{\lambda}, \qquad h_{\lambda,\varepsilon}''(x)(y,y)\leq D^2 \lambda \|y\|^2.
\end{align}

{\em Step 2}. \
By the It\^o formula,
\begin{equation}\label{eq:Itohlambdaeps}
\begin{aligned}
h_{\la,\varepsilon}(\xi_t) = h_{\la,\varepsilon}(x)+\int_0^t \lb a_s, h_{\la,\varepsilon}'(\xi_s)\rb\ud  s & +\int_0^t h_{\la,\varepsilon}'(\xi_s)\circ g_s\ud  W_s \\ &  + \frac12\int_0^t h_{\lambda,\varepsilon}''(\xi_s)(g_s , g_s ) \ud s.
\end{aligned}
\end{equation}
Since
\begin{align}\label{eq:quadr}\frac12\int_0^t h_{\lambda,\varepsilon}''(\xi_s)(g_s , g_s ) \ud s\leq \frac12 D^2\la \|g\|_{L^2(0,t;\gamma(H,X))}^2 \ \ \ \hbox{almost surely}
\end{align}
this proves \eqref{eq:Itohla} with $h_{\lambda,\varepsilon}$ instead of $h_{\lambda}$.

It remains to let $\varepsilon\downarrow 0$ in each of the terms in \eqref{eq:Itohlambdaeps}, except the last one which is estimated using \eqref{eq:quadr}. By path-continuity of the integrals it suffices to prove convergence for every fixed $t\in [0,T])$.

Convergence of the first two terms in \eqref{eq:Itohlambdaeps} is clear from the preliminaries in Step 1. For the third and fourth terms we can apply the pointwise convergence and the dominated convergence theorem (using the bound \eqref{eq:hlambdaepsilonest}) to obtain $\lb a_s, h_{\la,\varepsilon}'(\xi)\rb\to \lb a_s, h_{\la}'(\xi)\rb$ in $L^1(0,t)$ almost surely and
$h_{\la,\varepsilon}'(\xi)\circ g \to  h_{\la}'(\xi)\circ g$ in $L^2(0,t;H)$ almost surely to obtain the required convergence. This completes the proof in the finite-dimensional case.

\smallskip
{\em Step 3}. \ In the general case let $(a_n)_{n\geq 1}$ be a sequence simple functions and $(g_n)_{n\geq 1}$ be a sequence of finite rank adapted step processes such that $a_n\to a$ in $L^1(0,t;X)$ and $b_n\to b$ in $L^2(0,t;\gamma(H,X))$ almost surely. Let $\xi_n(t) := x+\int_0^t a_{n,s} \ud s + \int_0^t g_n \ud W$. Then $\xi_n\to \xi$ in $L^0(\Omega;C([0,t];X))$, and by passing to a subsequence we may suppose that $\xi_n\to \xi$ in $C([0,t];X)$ almost surely. By Step 1, \eqref{eq:Itohla} holds with $(a, g, \xi)$ replaced by $(a_n, g_n, \xi_n)$. Since $h_{\lambda}'$ is uniformly bounded and Lipschitz with constant $D^2 \lambda$ (this follows from the second estimate in \eqref{eq:hlambdaepsilonest} and letting $\eps\downarrow 0$), by dominated convergence we obtain $\lb a_n,h_{\lambda}'(\xi_n)\rb \to \lb a,h_{\lambda}'(\xi)\rb$ in $L^1(0,t;X)$ almost surely and $h_\la'(\xi_{n,s})\circ g_{n,s} \to h_\la'(\xi_s)\circ g_s $. Letting $n\to \infty$ we obtain \eqref{eq:Itohla} for $(a, g, \xi)$.
\end{proof}

For the remainder of the paper we assume that the following hypothesis is satisfied.

\begin{hypothesis}\label{hyp:evol} $(S(t,s))_{0\le s\le t\le T}$ is a $C_0$-evolution family of contractions and
$(A(t))_{t\in [0, T]}$ is a family of closed operators, acting on the same Banach space $X$. They enjoy the following properties:
\begin{enumerate}[\rm (1)]
\item\label{it:hyp1}  For all $t\in [0,T]$ we have
$(0,\infty)\subseteq \varrho(A(t))$ and there exist constant $M\geq 1$ such that
\[ \n \la (\la - A(t))^{-1}\n \le M, \ \  t\in [0,T],\ \  \lambda>0.\]
\item\label{it:hyp2} For all $t\in [0,T]$
and $\la\in (0,\infty)$ we have  $$\sup_{0\leq s\leq t\leq T}\|A(t) S(t,s) R(\lambda,A(s))\|<\infty.$$
\item\label{it:hyp3} For all $s\in [0,T]$ and $x\in \Dom(A(s))$ we have $S(\cdot,s)x\in W^{1,1}(s,T;X)$ and, for almost all $t\in [s,T]$,
\[\frac{{\rm d}}{{\rm d}t} S(t,s)x = A(t) S(t,s)x.\]
\end{enumerate}
\end{hypothesis}

\begin{remark}\label{rem:folk}
 It is folklore in the theory of evolution families that if Hypothesis \ref{hyp:evol} holds and each operator $A(t)$ is the generator of a $C_0$-semigroup, then \eqref{it:hyp1} holds with $M=1$.
\end{remark}

 Condition \eqref{it:hyp1} means that the operators $-A(t)$ are sectorial, uniformly with respect to $t\in[0,T]$. Condition \eqref{it:hyp2} expresses that $S(t,s)$ maps $\Dom(A(s))$ into $\Dom(A(t))$ with control on the norms uniformly with respect to $0\le s\le t\le T$. Condition \eqref{it:hyp3}  connects the operators $A(t)$ with $S(t,s)$ in the same way as a generator is connected to a semigroup of operators.
These conditions are satisfied in many applications (see e.g.\ \cite{AcqTer87, Amann95, Lun95, Pazy, Ta1, Yagi10}).

We are now ready to state the main result of this section. Under the additional assumption of Hypothesis \ref{hyp:evol} it provides another proof of the Gaussian tail estimate that can be obtained by combining Theorem \ref{thm:contractionS-new} with Corollary \ref{cor:contractionS-new}. The bound on the variance $\sigma^2$ obtained from that argument,
namely $100e D^2\n g\n_\infty^2$, is improved here to $2D^2 M^2\n g\n_\infty^2$, where $M$ is the constant in Hypothesis \ref{hyp:evol}\eqref{it:hyp1}. By Remark \ref{rem:folk} have $M=1$ in the case of a $C_0$-evolution family of contractions.

\begin{theorem}[Gaussian tail estimate]\label{thm:ExpestIto}
Let $(S(t,s))_{0\le s\le t\le T}$ be a $C_0$-evolution family of contractions on a $(2,D)$-smooth Banach space satisfying Hypothesis \ref{hyp:evol}.  For all $g\in L_{\bF}^\infty(\Om;L^2(0,T;\gamma(H,X)))$ the process $(\int_0^t S(t,s)g_s\ud W_s)_{t\in [0,T]}$ has a continuous version which satisfies
$$ \P\Bigl(\sup_{t\in [0,T]} \Big \n \int_0^t S(t,s)g_s\ud W_s\Big\n \ge r\Bigr) \le 3 \exp\Bigl(-\frac{r^2}{2\sigma^2}\Bigr)$$
for all $r>0$, where $\sigma^2 = 2D^2M^2\n g \n_{L^\infty(\Om;L^2(0,T;\gamma(H,X)))}^2$.
\end{theorem}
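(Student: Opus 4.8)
The plan is to reduce the Gaussian tail estimate to an exponential moment bound for the process $v_t := \int_0^t S(t,s)g_s\ud W_s$, obtained via the It\^o inequality of Theorem~\ref{thm-Ito} applied to the function $h_\la$. First I would fix a regularising parameter $\la>0$ and try to show that $\E h_\la(v_T)$ grows only mildly in $\la$, so that after optimising over $\la$ one recovers $\P(v^\star \ge r) \le 3\exp(-r^2/(2\sigma^2))$ with $\sigma^2 = 2D^2M^2\|g\|_\infty^2$. The key point is that $v_t$ is \emph{not} an It\^o process of the simple form $x+\int a + \int g\ud W$ to which Theorem~\ref{thm-Ito} applies directly, because of the $t$-dependence of $S(t,s)$; this is where Hypothesis~\ref{hyp:evol} enters.

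The main step is therefore to represent $v_t$ as a genuine It\^o process. Using Hypothesis~\ref{hyp:evol}\eqref{it:hyp3}, for each fixed $s$ the map $t\mapsto S(t,s)x$ is differentiable with derivative $A(t)S(t,s)x$, so formally
\[
v_t = \int_0^t S(t,s)g_s\ud W_s = \int_0^t g_s\ud W_s + \int_0^t A(r)\Bigl(\int_0^r S(r,s)g_s\ud W_s\Bigr)\ud r = \int_0^t g_s\ud W_s + \int_0^t A(r) v_r\ud r,
\]
by a stochastic Fubini argument. I would first establish this rigorously for $g$ a finite-rank step process with values in $\Dom(A(s))$ in an appropriate sense — here Hypothesis~\ref{hyp:evol}\eqref{it:hyp2} guarantees that $A(r)S(r,s)g_s$ is well-defined and uniformly bounded so the Bochner/It\^o integrals make sense and Fubini is legitimate. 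Thus $v$ solves \eqref{eq:strongsol} with $a_r = A(r)v_r$, and Theorem~\ref{thm-Ito} gives, almost surely for all $t$,
\[
h_\la(v_t) \le 1 + \int_0^t \lb A(s)v_s, h_\la'(v_s)\rb\ud s + \int_0^t h_\la'(v_s)\circ g_s\ud W_s + \tfrac12 D^2\la\|g\|_{L^2(0,t;\gamma(H,X))}^2.
\]

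The crucial estimate is the dissipativity-type bound $\lb A(s)v_s, h_\la'(v_s)\rb \le 0$. Since $h_\la'(x) = \la q'(x)/(2(1+\la\|x\|^2)^{1/2})$ and $q'(x) = 2\|x\| n_x$ with $\|n_x\| = 1$, it suffices to show $\lb A(s)x, n_x\rb \le 0$ for $x\ne 0$, i.e.\ that $-A(s)$ is dissipative with respect to every norming functional. This follows from Hypothesis~\ref{hyp:evol}\eqref{it:hyp1}: the Lumer--Phillips characterisation gives that $\n\la(\la-A(s))^{-1}\n \le M$ for all $\la>0$ forces, after a standard resolvent computation, $\mathrm{Re}\lb A(s)x, x^*\rb \le 0$ — but only up to the factor $M$; the honest bound one gets from $M\ge 1$ is $\lb A(s)x, n_x\rb \le \tfrac{M-1}{?}\cdots$, which is why the variance carries a factor $M^2$. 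I would work this out carefully: the uniform bound $M$ implies $\|(\la-A(s))x\| \ge \tfrac{\la}{M}\|x\|$, hence $\lb -A(s)x, n_x\rb = \lim_{\la\to\infty}\la(\|x\| - \mathrm{Re}\lb(\la-A(s))x/\la, n_x\rb) \cdots$ leading to a bound of the form $\lb A(s)v_s, h_\la'(v_s)\rb \le C(M)\la\|g\|^2$-type contribution; absorbing this correctly against the quadratic-variation term produces the constant $2D^2M^2$. Dropping the nonpositive drift (or controlling it by the above), taking expectations to kill the martingale term — after a localisation to ensure the stochastic integral $\int_0^t h_\la'(v_s)\circ g_s\ud W_s$ is a true martingale, which is legitimate since $\|h_\la'\|\le\sqrt\la$ is bounded and $g\in L^\infty(\Om;L^2)$ — yields
\[
\E h_\la(v_T) \le 1 + \tfrac12 D^2 M^2 \la \|g\|_{L^\infty(\Om;L^2(0,T;\gamma(H,X)))}^2.
\]

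From here I would apply Doob's maximal inequality to the nonnegative submartingale $t\mapsto h_\la(v_t)$ (submartingale because the drift is nonpositive, so $h_\la(v_t) - \tfrac12 D^2 M^2\la\|g\|^2$ dominates a martingale up to the drift) to get a bound on $\E\sup_t h_\la(v_t)$, then Markov's inequality: since $\sup_t h_\la(v_t) \ge (1+\la (v^\star)^2)^{1/2} \ge \sqrt\la\, v^\star$ on $\{v^\star\ge 0\}$, one obtains $\P(v^\star \ge r) \le \tfrac{1}{\sqrt\la r}(1 + \tfrac12 D^2 M^2\la\|g\|_\infty^2)$ or a comparable expression; optimising over $\la>0$ by choosing $\la \sim r^2/(D^2M^2\|g\|_\infty^2)^{\cdots}$ would not yet give the Gaussian decay — so instead I would iterate, exponentiating: apply the It\^o inequality to $\exp(\theta h_\la)$ or, cleaner, use that the estimate $\E h_\la(v_T)\le 1 + c\la$ for \emph{all} $\la>0$ is equivalent (by a Laplace-transform/Gaussian-integral identity, $\int_0^\infty (1+\la r^2)^{-1/2}\nu(\dd\la)\cdots$, or simply by the scaling $h_\la(x) = h_1(\sqrt\la x)$) to a sub-Gaussian bound on $v^\star$ with variance proxy $2c = D^2M^2\|g\|_\infty^2$; combined with the factor-of-$2$ loss from passing from a fixed-time to a maximal estimate via Doob, and invoking Lemma~\ref{lem:expesttail} (or a direct Chernoff argument) one arrives at $\P(v^\star\ge r)\le 3\exp(-r^2/(2\sigma^2))$ with $\sigma^2 = 2D^2M^2\|g\|_\infty^2$. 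The general case $g\in L^\infty(\Om;L^2(0,T;\gamma(H,X)))$ then follows by approximating $g$ by finite-rank step processes valued in the domains, using the continuity of $\Phi\mapsto\int_0^\cdot S(t,s)\Phi_s\ud W_s$ from Proposition~\ref{prop:contractionS} together with the fact that the tail bound passes to the limit. I expect the main obstacle to be the rigorous justification of the It\^o-process representation $v_t = \int_0^t g_s\ud W_s + \int_0^t A(r)v_r\ud r$ — i.e.\ the stochastic Fubini step and the verification that $A(\cdot)v_\cdot \in L^1_{\bF}(\Om;L^1(0,T;X))$ — which is precisely what Hypothesis~\ref{hyp:evol}\eqref{it:hyp2},\eqref{it:hyp3} are designed to supply, but which requires careful handling of domains and an approximation of $g$ by $\Dom(A(s))$-valued step processes.
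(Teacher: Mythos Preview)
Your overall architecture is right: represent $u_t$ as a strong solution via Hypothesis~\ref{hyp:evol}, apply the It\^o inequality of Theorem~\ref{thm-Ito}, control the drift, then optimise over~$\la$ and approximate. But two of the central steps are off.

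\medskip
\textbf{The drift term and the role of $M$.} You try to extract $\lb A(s)x, h_\la'(x)\rb \le 0$ from the resolvent bound in Hypothesis~\ref{hyp:evol}\eqref{it:hyp1}, and this is where you locate the constant~$M$. That is not how the argument runs. The drift is nonpositive simply because the evolution family is contractive: $t\mapsto h_\la(S(t,s)x) = (1+\la\|S(t,s)x\|^2)^{1/2}$ is non-increasing, so its $t$-derivative $\lb A(t)S(t,s)x, h_\la'(S(t,s)x)\rb$ is $\le 0$, and setting $t=s$ gives $\lb A(s)x, h_\la'(x)\rb \le 0$ with no constant at all. The factor $M^2$ enters only at the very end, in the approximation step: for general $g$ one regularises by $g_t^{(n)} := nR(n,A(t))g_t$, and then $\|g^{(n)}\|\le M\|g\|$ by Hypothesis~\ref{hyp:evol}\eqref{it:hyp1}, whence $C_{g^{(n)}} \le M^2 C_g$. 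Your proposed approximation by domain-valued step processes would not explain the $M^2$.

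\medskip
\textbf{From the It\^o inequality to the Gaussian tail.} This is the real gap. Taking expectations to kill the stochastic integral and obtaining $\E h_\la(u_T) \le 1 + c\la$ is far too weak: since $h_\la(x)\sim \sqrt\la\,\|x\|$ for large~$\la$, this family of bounds encodes only a first-moment estimate, not sub-Gaussianity, and no Laplace-transform identity rescues it. The correct mechanism is to \emph{keep} the stochastic integral $N_t := \int_0^t h_\la'(u_s)\circ g_s\ud W_s$ and form the exponential martingale $Z_t := \exp\bigl(N_t - \tfrac12[N]_t\bigr)$ (a genuine martingale by Novikov, since $[N]_T \le \la\|g\|_\infty^2$). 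Because $[N]_t \le \la C_g$ with $C_g = D^2\|g\|_\infty^2$, the It\^o inequality rewrites as
\[
h_\la(u_t) \le 1 + \log Z_t + \la C_g.
\]
Now Doob's $L^1$-inequality applied to the nonnegative martingale $Z$ (not to $h_\la(u)$) gives
\[
\P\bigl(\sup_t\|u_t\|\ge r\bigr) \le \P\bigl(\sup_t Z_t \ge e^{\phi_\la(r)-1-\la C_g}\bigr) \le e^{1+\la C_g - \phi_\la(r)},
\]
with $\phi_\la(r)=(1+\la r^2)^{1/2}$. Choosing $\la$ so that $\phi_\la(r)=r^2/(2C_g)$ yields the bound $3\exp(-r^2/(4C_g))$, i.e.\ $\sigma^2 = 2C_g = 2D^2\|g\|_\infty^2$ for regular $g$; the factor $M^2$ then appears through the resolvent approximation as explained above.
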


\begin{proof} The main idea is that
Theorem \ref{thm-Ito} provides the right estimate to generalise the proof of Brze\'zniak and Peszat's \cite[Theorem 1.2]{BrzPes}.
The proof will use some additional facts from stochastic analysis which are all standard and can be found in \cite{Kal, RY}.

\smallskip
{\em Step 1}. \
Let us first assume that $g\in L^\infty(\Om;L^2(0,T;\gamma(H,X)))$ is such that for every $t\in [0,T]$ we have $g_t\in \Dom(A(t))$ and $t\mapsto A(t)g_t$ belongs to $L^2(0,T;\gamma(H,X))$ almost surely. Under this assumption we claim that $u$ is a strong solution, i.e., almost surely we have
\begin{align}\label{eq:Ito-strong}
   u_t=\int_0^tA(s)u_s \ud s+\int_0^tg_s  \ud W_s, \qquad t\in [0,T].
\end{align}
This means that the assumptions of Theorem \ref{thm-Ito} are satisfied with $a_t = A(t)u_t$.
In order to prove \eqref{eq:Ito-strong} we set $u_t:= \int_0^t S(t,s)g_s\ud W_s$. Then
\[A(t) u_t =  \int_0^t A(t) S(t,s) g_s  \ud W_s\]
almost surely, since $A(t) S(t,s) R(1,A(s))$ is uniformly bounded by part \eqref{it:hyp2} of the hypothesis.
A standard argument involving the stochastic Fubini theorem and the formula
\[\int_r^t A(s) S(s,r) x\ud s=  S(t,r) x-x, \qquad x\in \Dom(A(r)),\]
(which follows from part \eqref{it:hyp3} of the hypothesis)
implies that for all $t\in [0,T]$ the identity \eqref{eq:Ito-strong} holds almost surely. By path continuity,
almost surely the identity holds for all $t\in [0,T]$. This concludes the proof of the claim.

\smallskip
{\em Step 2}. \
Since $ t\mapsto h_\la(S(t,s)x) = (1+\la \n S(t,s) x\n^2)^{1/2}$
is non-increasing by the contractivity of $S(t,s)$, for all $x\in \Dom(A(t))$ we have
$$\lb A(t)S(t,s)x,h_\la'(S(t,s)x)\rb = \frac{{\rm d}}{{\rm d}t} h_\la(S(t,s)x) \leq 0.$$
Therefore, setting $t=s$, we obtain $\lb A(s)x, h_\la'(x)\rb \leq 0$ for almost all $s\in [0,T)$.
Hence, by Theorem \ref{thm-Ito} applied with $a_s = A(s)u_s $ and $x=0$, and noting that $h_\la(0)=1$,
\begin{equation}\label{eq:hlaXt}
\begin{aligned}
 h_\la(u_t) \le
1+ \int_0^t h_\la'(u_s )\circ g_s  \ud W_s + \frac12 \lambda D^2 \|g\|_{L^2(0,t;\gamma(H,X))}^2.
\end{aligned}
\end{equation}
Below we will several times use that $\|h_\la'(x)\|\leq \lambda^{1/2}$  (see \eqref{eq:hlambdaepsilonest}).

The quadratic variation of the process defined by $N_t:= \int_0^t h_\la'(u_s)\circ g_s\ud W_s$ is given by $[N]_t = \int_0^t \n h_\la'(u_s)\circ g_s \n_H^2\ud s$. Therefore the process defined by
$Z_t := e^{N_t - \frac12[N]_t}$ is a local martingale by It\^o's formula and
\[Z_t = 1 + \int_0^t Z_s h_\la'(u_s )\circ g_s \ud W_s \]
Since $Z$ is non-negative, it is a supermartingale and therefore $\E(Z_t) \leq 1$. Since $[N]_t\leq \lambda \|g\|_{L^\infty(\Omega;L^2(0,T;\gamma(H,X)))}^2$ almost surely, it is standard to check $Z$ is actually a martingale with $\E(Z_t) = \E(Z_0) = 1$ for all $t\in [0,T]$ (this follows for instance from Novikov's condition).
We can rewrite \eqref{eq:hlaXt} in the form (using that $D\geq 1$)
\begin{align*}
h_\la(u_t) & \le 1+ \log Z_t +  \frac12\int_0^t \n h_\la'(u_s )\circ g_s  \n_H^2\ud s + \frac12\lambda D^2 \|g\|_{L^2(0,t;\gamma(H,X))}^2
\\ & \leq 1+ \log Z_t +  \lambda C_g,
\end{align*}
where $C_g:=D^2 \|g\|_{L^\infty(\Omega;L^2(0,T;\gamma(H,X)))}^2$. Setting $\phi_{\lambda}(r) = (1+\la r^2)^{1/2}$, Doob's inequality gives
\begin{align*}
\P\Bigl(\sup_{t\in [0,T]} \n u_t\n \ge r\Bigr)
& =  \P\Bigl(\sup_{t\in [0,T]}  h_\la(u_t) \ge \phi_\la (r)\Bigr)
\\ & =  \P\Bigl(\sup_{t\in [0,T]} \log Z_t \ge \phi_\la (r) -1 - \la C_g\Bigr)
\\ & =  \P\Bigl(\sup_{t\in [0,T]} Z_t \ge \exp(\phi_\la (r) -1 - \la C_g)\Bigr)
\\ & \le \exp(1 + \la C_g- \phi_\la (r)) \E Z_T
 = \exp(1 + \la C_g- \phi_\la (r)).
\end{align*}
If $r^2 \ge 2C_g$, choose $\la>0$ so that $1+\la r^2 = r^4/(4C_g^2)$. Then $\phi_\la(t) = r^2/(2C_g)$ and
 $$  \P\Bigl(\sup_{t\in [0,T]} \n u_t\n \ge r\Bigr)
 \le \exp\Bigl(1 + \frac{r^2}{4C_g^2} C_g- \frac{r^2}{2C_g}\Bigr)
 = \exp\Bigl(1 - \frac{r^2}{4C_g}\Bigr) \le 3  \exp\Bigl(- \frac{r^2}{4C_g}\Bigr).
$$
If $0<r^2< 2C_g$, we have the trivial inequality
$$  \P\Bigl(\sup_{t\in [0,T]} \n u_t\n \ge r\Bigr) \le 1 \le 3e^{-1/2} \le  3\exp\Bigl(- \frac{r^2}{4C_g}\Bigr).$$
This proves the result under the additional assumption on $g$ made at the beginning of Step 1.

\smallskip
{\em Step 3}. \
In the general case set $g_t^{(n)} := nR(n,A(t))g_t $ for $n\geq 1$. For all $t\in [0,T]$ and $\omega\in \Omega$ we have $\|g_t^{(n)}(\omega)\|\leq M\|g_t(\omega)\|$ and consequently $C_{g^{(n)}}\leq M^2 C_g$ by sectoriality. It follows that $g^{(n)}\to g$ in $L^2(\Omega;L^2(0,T;\gamma(H,X)))$. Therefore, using any of the known maximal tail or $L^p$-estimates (e.g.,  Proposition \ref{prop:contractionS}) we infer that the corresponding stochastic convolutions satisfy $u^{(n)}\to u$ in $L^0(\Omega;C([0,T];X))$. This implies the tail estimate in the general case.
\end{proof}

\begin{remark}[$L^p$-bounds]\label{rem:LpcaseIto}  A variant of the It\^o inequality of Theorem \ref{thm-Ito} can be proven for $\|x\|^p$ with $p\geq 2$.
Then, in the same way as \cite[Theorem 1.2]{NeeZhu} (due to the time dependence in $A$ some modifications are required in the approximation argument which are similar to the ones in the proof of Theorem \ref{thm:ExpestIto}), it is possible to recover the conclusion of Theorem \ref{thm:contractionS-new}. Tracking the constant $C_{p,D}$, this proof does not seem not to give the correct order $O(\sqrt{p})$ as $p\to \infty$, however.
\end{remark}

\begin{remark}[Random evolution families]\label{rem:Aadapted} We now indicate how Theorem \ref{thm:ExpestIto} and the result pointed at in Remark \ref{rem:LpcaseIto} can be generalised to {\em random} evolution families.
To make this notion precise we assume that for all $\om\in\Om$ a family
$(A(t,\omega))_{t\in [0,T]}$ of closed operators on $X$ is given, as well as
a $C_0$-evolution
family $(S(t,s,\om)_{0\le s\le t\le T}$
satisfying Hypothesis \ref{hyp:evol}, with estimates uniform in $\om\in\Omega$. We furthermore assume that for all $0\leq s\leq t\leq T$ and $x\in X$ the random variable $S(t,s,\cdot)x$ is strongly $\F_t$-measurable.
In what follows we will suppress the $\om$-dependence from our notation whenever it is convenient.

Under these assumptions it is not even clear how the problem should be stated to begin with, because the stochastic convolution integral $\int_0^t S(t,s)g_s\ud W_s$ is not well defined in general. The reason is that the random variables $S(t,s)x$ are assumed to be $\F_t$-measurable rather than $\F_s$-measurable, and therefore the integrand will not be progressively measurable in general. To overcome this problem one notes that in the $\Omega$-independent case, for sufficiently regular $g$ one has the almost sure identity
\begin{equation}\label{eq:MP} 
\begin{aligned}
u_t :=  \int_0^t S(t,s)g_s\ud W_s & = S(t,0) \int_0^t g_r \ud W_r
\\ & \phantom{ = } - \int_0^t S(t,s) A(s)\Bigl(\int_s^t g_r \ud W_r \Bigr)\ud s, \quad t\in [0,T].
\end{aligned}
\end{equation}
Following \cite{ProVer14}, in  the $\Omega$-dependent case we {\em define} the process $(u_t)_{t\in [0,T]}$ to be given by the expression on the right-hand side of \eqref{eq:MP} and refer to it as the {\em pathwise mild solution} of the problem $\!\ud u_t = A(t)u_t\ud t + g_t\ud W_t$.
This formula has the merit of avoiding adaptedness issues and $(u_t)_{t\in [0,T]}$ can be shown to be progressively measurable. Pathwise mild solutions were extensively studied in the parabolic setting in \cite{ProVer14}; under the assumptions spelled out below the general case can be treated along similar lines.
The problem of extending Theorem \ref{thm:contractionS-new} and the result mentioned in Remark \ref{rem:LpcaseIto} can now be formulated as proving suitable Gaussian tail estimates and $L^p$-bounds for the random variable $$u^\star:= \sup_{t\in [0,T]}\n u_t\n$$ with $u_t$ given by the right-hand side of \eqref{eq:MP}.

The proof of Theorem \ref{thm:ExpestIto} can be repeated as soon as a suitable analogue of \eqref{eq:Ito-strong} is available. This is the case under the following additional technical assumptions.
We assume that there exist continuous and dense embeddings of Banach spaces $Y_2\hookrightarrow Y_1\hookrightarrow Y_0 = X$ such that
pointwise on $\Omega$ the following conditions are satisfied almost surely:
\begin{enumerate}[\rm(1)]
 \item for all $0\le s\le t\le T$ we have $S(t,s)Y_1\subseteq Y_1$;
 \item for all $t\in [0,T]$ we have $Y_1\subseteq \Dom(A(t))$ and, for $i\in \{0,1\}$, we have $A(t)Y_{i+1}\subseteq Y_{i}$ boundedly and the process $A|_{Y_{i+1}}:[0,T]\times\Omega\to \calL(Y_{i+1}, Y_{i})$ is uniformly bounded and progressively measurable;
 \item for all $0\leq s\leq t\leq T$ and $ y\in Y_1$ we have  $S(t,\cdot)y \in W^{1,1}(0,t;X)$ and  $$\frac{{\rm d}}{{\rm d}s} S(t,s) y = -S(t,s) A(s) y.$$
\end{enumerate}
Examples where this holds are discussed in the references \cite{AcqTer87, Amann95, Lun95, Pazy, Ta1, Yagi10} cited earlier.

Suppose first that $g$ is an adapted finite rank step process $g$ with values in $Y_2$. The process $u$ defined by the right-hand side of  \eqref{eq:MP} then satisfies $u_t\in \Dom(A(t))$ and the process $(t,\om)\mapsto A(t,\om) u_t(\om)$ is uniformly bounded. Moreover, as in \cite[Proposition 4.2 and Theorem 4.7]{ProVer14} and \cite[Appendix]{KuhnNe20} one checks that
the analogue of \eqref{eq:Ito-strong} holds, i.e., almost surely we have
\begin{align*}
   u_t=\int_0^tA(s)u_s \ud s+\int_0^tg_s  \ud W_s, \qquad t\in [0,T].
\end{align*}
By following the proof of Theorem \ref{thm:ExpestIto} a Gaussian tail estimate for $u^\star$ is obtained
under the assumption that $g$ is an adapted finite rank step process with values in $Y_2$.

One would like to derive the general case by means of an
approximation argument, but this is not straightforward due to the $L^\infty$-norm in the expression for the variance.
Rather, we will first extend the $L^p$-estimate of Remark \ref{rem:LpcaseIto} to random evolution families first for adapted finite rank step processes $g$. In the $L^p$-case, the density argument can be carried out which, for arbitrary $g\in L^p_{\bF}(\Omega;L^2(0,T;\gamma(H,X)))$, gives a limiting process $(u_t)_{t\in [0,T]}$ satisfying the $L^p$-maximal estimate.
To identify the limiting process as a solution in a weak sense to the evolution equation at hand, we make the additional assumption that there exists a dense linear subspace $F\subseteq X^*$ satisfying $F\subseteq \Dom((A(s))^*)$ for all $s\in [0,T]$ and $\omega\in\Omega$ and that for all $x^*\in F$ the mapping $(s,\omega)\mapsto (A(s,\omega))^*x^*$ is strongly measurable and uniformly bounded. Under this assumption it is straightforward to check that the limiting process $u$ is a weak solution in the following sense: for all $x^*\in F^*$, almost surely we have
\begin{align*}
   \lb u_t, x^*\rb =\int_0^t \lb u_s, A(s)^* x^*\rb \ud s+\int_0^t x^* \circ g_s^*  \ud W_s, \qquad t\in [0,T].
\end{align*}

To prove the Gaussian tail estimate for general $g\in L_{\bF}^\infty(\Om;L^2(0,T;\gamma(H,X)))$ one checks that there exists adapted finite rank step processes $(g^{(n)})$ with values in $Y_2$ such that $\|g^{(n)}\|_{L^2(0,T;\gamma(H,X))}\leq \|g\|_{L^2(0,T;\gamma(H,X))}$ almost surely and $g^{(n)}\to g$ in $L^2(\Omega;L^2(0,T;\gamma(H,X)))$.
Then the previously mentioned $L^p$-estimate for $p=2$ implies that the corresponding solutions satisfy $u^{(n)}\to u$ in $L^2(\Omega;C([0,T];X))$. Therefore, the Gaussian tail estimate for the pair $(g, u)$ follows from the one for $(g^{(n)}, u^{(n)})$ with constant $\sigma^2 = 2D^2\n g \n_{L^\infty(\Om;L^2(0,T;\gamma(H,X)))}^2$.
\end{remark}

\section{Open questions}

\begin{question}
Let $(S(t))_{t\geq 0}$ be a $C_0$-semigroup on a $2$-smooth Banach space $X$ and let $W$ be a standard real-valued Brownian motion with values. Does the process $(\int_0^t S(t-s) g_s \ud W_s)_{t\ge 0}$ have a continuous version for every $g\in L^2(\Omega;L^2(0,T;X))$?
\end{question}

This question is open even in the case of Hilbert spaces $X$.
More generally one could pose the above question for $H$-cylindrical Brownian motions and processes
$g\in L^0(\Omega;L^2(0,T;\gamma(H,X)))$. A possible approach could be to prove that whenever a Banach space $X$ is $2$-smooth and $(S(t))_{t\ge 0}$ is a given $C_0$-semigroup on $X$, then
there exist an equivalent $2$-smooth norm with respect to which the semigroup is contractive.

\begin{question}
Which $C_0$-semigroups and $C_0$-evolution families admit an (approximate) invertible dilation?
\end{question}

As explained in Theorem \ref{thm:approxinvdilation}, one can deduce maximal inequalities for stochastic convolutions in case $C_0$-semigroups/evolution families admit an approximate invertible dilation.

\begin{question}
Is there a ``supertheory'' containing both the theories of stochastic integration in $2$-smooth spaces and UMD spaces as special cases?
\end{question}

Presumably, such a theory would be based on some decoupling principle, in which case one expects it to apply to the class of Banach spaces satisfying a suitable decoupling inequality. For more details on decoupling and its use in the theory of stochastic integration the reader is referred to \cite{cox2018decoupling} and references therein. A possible approach could be to answer the next open question:

\begin{question}\label{q4}
Does $2$-smoothness of $X$ imply the existence of a finite constant $C\ge 0$ such that for all $n\geq 1$ and all functions $f_j:\{-1,1\}^{j-1}\to X$ for $1\leq j\leq n$ one has the decoupling inequality
\[\E \Big\|\sum_{j=1}^n r_j f_j(r_1, \ldots, r_{j-1}) \Big\|^2 \leq C^2\E \Big\|\sum_{j=1}^n \widetilde{r}_j f_j(r_1, \ldots, r_{j-1}) \Big\|^2 \ ?\]
\end{question}
Here, $(r_j)_{j\geq 1}$ is a sequences of independent Rademacher random variables (a {\em Rademacher random variable} is a random variable taking the values $+1$ and $-1$ with equal probability $\tfrac12$) and $(\widetilde{r}_j)_{j\geq 1}$ is an independent copy of  $(r_j)_{j\geq 1}$. This decoupling inequality is satisfied in UMD spaces.

For the class of Banach spaces with type $2$, which contains the class of all $2$-smooth Banach spaces as a proper subclass, Question \ref{q4} has a negative answer. Indeed, James \cite{James78} constructed a non-reflexive space $X$ with type $2$. If the decoupling inequality would hold in this space, then \cite[Propositions 4.3.5 and 4.3.13]{HNVW16} imply that $X$ has martingale type $2$ and hence is reflexive, a contradiction.

\medskip

\noindent {\em Acknowledgment.} \
We thank the anonymous referees for helpful comments.

\newcommand{\etalchar}[1]{$^{#1}$}

\end{document}